\newtheorem{assumption}{Assumption}[section]
\def\calt{\cal T}
\def\b{{\bf{b}}}
\def\u{{\bf{u}}}
\def\v{{\bf{v}}}
\def\c{{\bf{c}}}
\def\d{{\bf{d}}}
\lstdefinestyle{code}{
  language=Julia,
  xleftmargin=1.5cm,
  showstringspaces=false,
}
\numberwithin{theorem}{section}
\begin{document}

\title{Robust Solutions of Nonlinear Least Squares Problems via Min-max Optimization}
\shorttitle{Robust Nonlinear Least Squares Problems}
\author{%
{\sc Xiaojun Chen\thanks{Corresponding author. Email: xiaojun.chen@polyu.edu.hk}
} \\[2pt]
Department of Applied Mathematics, The Hong Kong Polytechnic University,
 Kowloon, Hong Kong, China\\
 {\sc and }\\
 {\sc C.T. Kelley} \thanks{Email: Tim\_Kelley@ncsu.edu\\ {\bf Revised 20 December 2024}
}\\[2pt]
 North Carolina State University,
Department of Mathematics,
Box 8205, Raleigh, NC 27695-8205, USA
 }

\shortauthorlist{X. Chen and C.T. Kelley}

\maketitle

\begin{abstract}
{This paper considers  robust solutions to a class of nonlinear least squares problems using min-max optimization approach.
 We give an explicit formula for the value function
of the inner maximization problem and show the existence of global minimax points.
We establish error bounds from any solution of the nonlinear least squares  problem to the solution set of the robust nonlinear least squares problem.
Moreover, we propose a smoothing method for finding a global minimax point of the min-max problem by using the formula and show that finding an $\epsilon$ minimax critical point of the min-max problem needs at most $O(\epsilon^{-2} +\delta^2 \epsilon^{-3})$
evaluations of the function value and gradients of the objective function, where
$\delta$ is the tolerance of the noise. Numerical results of integral equations with uncertain data demonstrate the robustness of solutions of our approach and unstable behaviour of
least squares solutions disregarding uncertainties in the data.}
{Min-max optimization; nonlinear least squares problems; numerical solution of integral equations; nonsmooth nonconvex optimization, complexity }
\end{abstract}

\section{Introduction}

Let $\F:\R^n \to \R^m $ be a continuous function and $\|\cdot\|$ denote the Euclidean norm. The nonlinear least squares problem
\vspace{-0.02in}
\begin{equation}\label{nls}
\min_\x \|\F(\x)\|^2
\end{equation}
is widely used to model problems in science and engineering. When the problems contain uncertain or noisy data, it is important to find a robust solution. In this paper, we consider the following min-max problem
\begin{equation}\label{eq2}
\min_\x\max_{\y\in \Omega} f(\x,\y):= \|\F(\x)-\C\y\|^2,
\end{equation}
for the robust nonlinear least squares problem
where $\C \in \mathbb{R}^{m\times r}$ with $r \le m$,
$\Omega=\{\y \in \mathbb{R}^r\, : \, \|\y\|_\infty \le \delta\}$, and
$\delta\ge 0$.
If $\delta=0$, then problem (\ref{eq2}) reduces to problem (\ref{nls}).  In some applications, the perturbation vector has a known structure and can be present as
a finite linear combination of columns of $C$. See \citet{robustls} and \S 5.2 in this paper.

A particular example is from the
first kind Fredholm  nonlinear integral equation with noisy
data, which can be written in the following form:
\begin{equation}\label{VF}
h(x)=\int^1_0 p(x,y)q(x,u(y)) \ dy,
\end{equation}
where $h, p, q$ are continuous functions.
The low dimensional projection method is a popular numerical method
to find an approximate
solution of (\ref{VF}).  For example, using
\[
\tilde{u}(\vx)=\sum^n_{i=1} \x_{i} T_{i-1}(x),
\]
where $T_i$ is the $i$th degree Chebyshev polynomial on $[0,1]$, we obtain a nonlinear least squares problem  (\ref{nls}). If $h$ contains uncertain or noisy data, we can use
the min-max model (\ref{eq2}) to
find a robust least squares solution.

 Nonlinear least square problems are also often seen in data sciences and machine learning, for example, training convolutional neural network (CNN). Given input and output data sets $\{\a_i\in \mathbb{R}^{\hat{n}}\}_{i=1}^N$ and $\{\b_i\in \mathbb{R}^k\}_{i=1}^N$, the training process of a two-layer CNN \citep{Cui,Liu-Chen1,Liu-Chen}
is represented as
\begin{equation}\label{CNN}
\min_\x \frac{1}{N}\sum_{i=1}^N \|\b_i-\U\sigma(\W\sigma(\V\a_i+\u) +\v)\|^2,
\end{equation}
where  $\U\in \mathbb{R}^{k\times l}$, $\mw\in \mathbb{R}^{l\times l}$,
$\mv\in \mathbb{R}^{l\times \hat{n}}$ are the weight matrices,
$\u\in  \R^l, \v\in \R^k$ are the bias vectors, $\sigma$ is an
activation function, e.g., the ReLU activation function
$\sigma(\vz)=\max(0,\vz)$.  Let
$$
\x=({\rm vec}(\U)^\top,{\rm vec}(\V)^\top, {\rm vec}(\W)^\top, \u^\top,\v^\top)^\top \in \mathbb{R}^{n}
$$
    with $n={kl+l^2+l\hat{n}+l+k}$ and $m=Nk$.  When we study a big set of data, $m$ can be much bigger than $n$.  Problem (\ref{CNN}) can be written as problem (\ref{nls}), which is an overdetermined least squares problem.
When the output data $\{\b_i\in \mathbb{R}^k\}_{i=1}^N$ contain noise,  the min-max problem (\ref{eq2}) can be used to find a robust least squares solution.

First order methods including decent-ascent methods have been widely used to find the first order stationary points of min-max problems \citep{D-Jordan,Jin,Jiang-Chen,YangJ}. However, the first order stationary points defined by gradients of the objective function  $f$ can lead to incorrect solutions of min-max problems. In subsection 1.2,  we show with an example of robust linear least square problems that solving the first order optimality
conditions can lead to incorrect results.

This paper makes three contributions.
\begin{itemize}
\item We use a change of variable in $\vy$
to give an explicit formula for the value function of the inner  maximization problem
\begeq
\label{inner}
\varphi(\x)=
\|\F(\x)\|^2+2\delta \| \mc^\top \mf (\x)\|_1+
\| \mc \|_F^2 \delta^2=\max_{\y\in \Omega} f(\x,\y),
\endeq
for any $\x\in \mathbb{R}^n$, where $ \| \mc \|_F$
is the Frobenius norm of $\mc$.

\item
We propose a smoothing method for finding a global minimax point of the min-max problem by using the function $\varphi$ and show that finding an $\epsilon$ minimax critical point of the min-max problem (\ref{eq2}) needs at most $O(\epsilon^{-2} +\delta^2 \epsilon^{-3})$
evaluations of the function value and gradients of the objective function.


\item We give error bounds from any solution of nonlinear least squares problem (\ref{nls}) to the set of minimizers of $\varphi.$ We give an explicit form
    for the difference of
    worst-case residual errors of a least squares solution and a robust least squares solution, and show  the difference increases linearly as the tolerance of the noise increases.
\end{itemize}

\subsection{Important concepts}

In this subsection we review some subtle concepts for min-max problems.
There are several forms of stationarity and they differ in important
ways. These ideas will reappear throughout the paper.

A point $(\x^*, \y^*)\in \mathbb{R}^n\times \Omega$ is called a
{\bf saddle point} \citep{von-Neumann-1928} of problem (\ref{eq2}) if  for
any $\x \in \mathbb{R}^{m}, \y\in \Omega,$ we have
\begin{equation}\label{saddle}
 f(\x^*, \y)\le f(\x^*, \y^*)\le f(\x, \y^*).
\end{equation}

A point $(\x^*, \y^*)\in \mathbb{R}^n\times \Omega$ is called a {\bf local saddle point}  of problem (\ref{eq2}) if (\ref{saddle}) holds in a neighborhood of
$(\x^*, \y^*)$.

A point $(\x^*, \y^*)\in \mathbb{R}^n\times \Omega$ is called a {\bf global minimax point} \citep{Jin} of problem (\ref{eq2}) if

\[
\max_{\y\in \Omega} f(\x^*, \y)\le f(\x^*, \y^*)
\le \min_\x \max_{y\in \Omega} f(\x, \y).
\]

A point $(\x^*,\y^*)\in \mathbb{R}^n\times \Omega$ is called a
{\bf local minimax point} of problem (\ref{eq2}), if there exist a
$\rho_0>0$ and a function $\tau:\mathbb{R}_+\rightarrow \mathbb{R}_+$
satisfying
$\tau(\rho)\rightarrow 0$ as $\rho\rightarrow 0$,
such that for any $\rho\in (0,\rho_0]$ and any
$(\x,\y)\in\mathbb{R}^n\times \Omega$ satisfying $\|\x-\x^*\|\leq \rho$ and $\|\y-\y^*\|\leq \rho$, we have

\[
f(\x^*,\y) \leq f(\x^*,\y^*) \leq \max_{\y'\in\{\y\in \Omega: \|\y-\y^*\|\leq \tau(\rho)\}}f(\x,\y').
\]

By Theorem 3.11 in \citet{Jiang-Chen}, if $f$
is directionally differentiable then
a local minimax point $(\x^*,\y^*)\in \mathbb{R}^n\times \Omega$
of problem (\ref{eq2}) satisfies the following first order condition
\begin{equation}\label{f-first}
\begin{array}{ll}
d_\x f(\x^*,\y^*;\v)=0, \quad \forall \, \v\in \mathbb{R}^n\\
d_\y f(\x^*,\y^*;\u)\le 0, \quad \forall \,  \u \in \mathrm{T}_\Omega(\y^*),
\end{array}
\end{equation}
where $d_\x f(\x^*,\y^*;\v)$ and $d_\y f(\x^*,\y^*;\u)$
denote the directional derivatives of $f$ at $(\x^*, \y^*)$
along the directions $\v$ and $\u$, respectively,
and
\[
\mathrm{T}_\Omega(\y)=:
\left\{ \w: \exists~ \y^k\overset{\Omega}{\to} \y,
t^k\downarrow 0 ~ \text{such that}~ \lim_{ k\to \infty }
\frac{\y^k-\y}{t^k} = \w \right\}
\]
is the tangent cone to $\Omega$ at $\y\in \Omega$ \citep[Definition 6.1]{RW2009variational}.   When $\F$ is differentiable, the first order optimality condition of problem (\ref{eq2}) has the following form
\begin{eqnarray}\label{first}
\begin{array}{ll}
\F'(\x)^\top(\F(\x)-\C\y)=0\\
\y={\rm Proj}_\Omega(\y-\alpha\C^\top(\F(\x)-\C\y)),
\end{array}
\end{eqnarray}
where $\alpha$ is any positive scalar.

Since the objective function is continuous and the feasible set $\Omega$ is bounded in problem (\ref{eq2}), the solution set

\[
\Omega^*(\x)=\mathop{\mathrm{argmax}}_{\y\in \Omega}f(\x,\y)
\]
is nonempty and bounded for any $\x\in \mathbb{R}^n$. Hence, the function $\varphi$ is well-defined.
However, it is known that solving a convex quadratic maximization over a box feasible set is NP-hard \citep{NP-hard}.
People often consider the first order optimality condition for nonconvex-nonconcave min-max problem \citep{D-Jordan,Jiang-Chen,YangJ}.
 However a solution of this first order optimality condition can be meaningless for (\ref{eq2}), unless the problem has a local minimax problem, for example, convex-concave saddle point problems with cardinality penalties \citet{BXChen2024}.
\subsection{Example}
Consider problem (\ref{eq2}) with $\F(\x)=\A\x-\b$ and $\C=\I$. Suppose that $\A\in\R^{n\times n}$  is nonsingular. Then any pair $(\x, \y)=(\A^{-1}(\b+\y), \y)$ with $\y\in \Omega$  satisfies
the first order condition (\ref{first}). On the other hand,
the optimal solution of the inner maximization problem has the closed form

\begin{equation}\label{y-form}
\y_i(\x)=\mathop{\mathrm{argmax}}_{|\y_i|\le \delta}(\A\x-\b -\y)_i^2= \delta \left\{ \begin{array}{ll}
1 &   {\rm if} \, (\A\x-\b)_i <0\\
\{-1, 1\} & {\rm if} \,(\A\x-\b)_i=0\\
-1 &  {\rm if }\, (\A\x-\b)_i>0,
\end{array}
\right.\quad i=1,\ldots,n.
\end{equation}
Hence
$$\min_\x\max_{\y\in \Omega}  \|\A\x-\b-\y\|^2=\min_\x \|\A\x-\b-\y(\x)\|^2=n\delta^2$$
at $(\x^*, \hat{\y})=(\A^{-1}\b, \delta\c)$,
where $\c$ is a vector with
all elements being 1 or -1. In fact, by the
Danskin theorem, the inner function $\|\A\x-\b-\y(\x)\|^2$ is strongly convex.
Hence, the minimization problem has only one solution $\x^*=\A^{-1}\b.$
It is worth noting that  $(\x^*, \hat{\y})$ is not a saddle point
 of $f(\x,\y)=\|\A\x-\b-\y\|^2,$ for any $\hat{\y}$ with $\hat{\y}_i\in \{\delta, -\delta\}$, since
$$f(\x^*,\hat{\y})= n\delta^2 > f(\A^{-1}(\b+\hat{\y}),\hat{\y})=0.$$

In fact, $f$ does not have a saddle point, since
$$n\delta^2 = \min_\x\max_{\y\in \Omega}  f(\x,\y)\neq \max_{\y\in \Omega} \min_\x f(\x,\y)=0.$$
\noindent See \citet[Theorem 1.4.1]{Pang-book2}. Moreover, due to the arbitrary choice of $\delta $ for the closed form solution
(\ref{y-form}) of the maximization problem, we can easily find that $(\x^*,\hat{\y})$ is not a local saddle point.

On the other hand, we  find

$$
\max_{\y\in \Omega} f(\x^*, \y)=f(\x^*, \hat{\y})=\min_\x \max_{y\in \Omega} f(\x, \y).
$$

\noindent Hence $(\x^*, \hat{\y})$ is a global minimax point of $f(\x,\y)=\|\A\x-\b-\y\|^2$.
However,  $(\x^*, \hat{\y})$ is not a local minimax point of  $f$. Indeed, consider $\hat{\y}=\delta \e$, where $\e$ is the vector with
all elements being 1.

Let $\rho < \frac{\delta}{2\|\ma\|}$ and $\tau(\rho)<\frac{\delta}{2}$ satisfying $\tau(\rho) \to 0$ as $\rho \to 0$.
For any $(\x,\y)\in \mathbb{R}^n\times \Omega $ satisfying $\|\x-\x^*\|\le \rho$
and $\|\y-\delta \e\|\le \rho$, we have

$$|\ma \x-\b|_i\le \|\ma \x-\b\|=\|\ma (\x-\x^*)\|\le \|\ma\|\rho\le \delta-\frac{\tau(\rho)}{2}$$
 and for $i=1\ldots, n$,
$$
\y_i(\x)=\mathop{\mathrm{argmax}}_{|\y_i|\le \delta, |\y_i-\delta|\le \tau(\rho)}(\ma\x-\b -\y)_i^2= \delta \left\{ \begin{array}{ll}
1 &   {\rm if} \, (\A\x-\b)_i <0\\
1 & {\rm if} \,(\A\x-\b)_i=0\\
1 &  {\rm if }\, 0<(\A\x-\b)_i\le \delta-\frac{\tau(\rho)}{2}.
\end{array}
\right.
$$
Let $\x^1=\ma^{-1}(\b+\epsilon \e)$ with $\epsilon \le \rho/\|\ma^{-1}\e\|$. Then
we obtain $\|\x^1-\x^*\|\le \rho$ and
\begin{eqnarray*}
n\delta^2=f(\x^*,\hat{\y})&>&\max_{\|\y\|_\infty\le \delta, \|\y-\delta \e\|_\infty\le \tau(\rho)}\|\A\x^1-\b -\y\|^2
= \|\epsilon\e -\y(\x)\|^2\\
&=&(\epsilon-\delta)^2n\ge \min_\x\max_{\|\y\|_\infty\le \delta, \|\y-\delta\e\|_\infty\le \tau(\rho) } f(\x,\y).
\end{eqnarray*}

Hence, $(\x^*, \hat{\y})$ is not a local minimax point of  $f$.
Moreover, it is easy to see that $(\x^*, \hat{\y})$ does not satisfy the first order condition (\ref{first}), since $\ma^\top(\ma\x^*-\vb-\hat{\y})=-\delta\ma^\top \e\neq 0$. On the other hand, if $(\x', \y')$ satisfies the first order condition (\ref{first}), then from the nonsingularity of $\ma$, we have $f(\x',\y')=0$, which implies
$(\x', \y')$ cannot be a local minimax point or a global
minimax point, since $0=f(\x',\y')\not\ge\max_{\y\in \Omega}f(\x', \y)>0$.

Note that for nonconvex-nonconcave min-max optimization, a local minimax point must satisfy the first order condition, but  a global minimax point is not necessarily  a stationary point \citep{Jiang-Chen}.

In this example we present a min-max problem for robust linear
least squares problems which does not have a saddle point, a local saddle point and a local minimax point, while it has first order stationary points and global minimax points. However, the set of first order stationary points and
the set of global minimax points do not have a common point.

This example also shows that the study of the value function of the inner maximization problem is important.  In \S~\ref{subsec:value},  we give a proof of closed form expression (\ref{inner}) for the inner
value function. Using the inner value function, we prove the existence of a minimax point of the robust nonlinear least squares problem (\ref{eq2}). In \S~\ref{sec:stable}, we
apply the formula to expose some connections between
solutions of (\ref{eq2}) and those of (\ref{nls}), and give error bounds from any solution of nonlinear least squares problem (\ref{nls}) to the set of minimizers of $\varphi.$
In \S~\ref{smooth},
we propose a smoothing method with the formula to obtain a complexity bound
for finding an $\epsilon$ minimax critical point of
a global  minimax point of (\ref{eq2}). In \S~\ref{LsvRLs}, we compare least squares solutions and
robust least squares solutions in the worst-case, and show the difference of their worst-case residual errors
increases linearly as the tolerance of the noise increases. Moreover, we present numerical results of integral
equations with uncertain data to show the robustness of solutions of our approach.

\section{The value function}
\label{subsec:value}

The study of the inner function of the maximization problem is
very important for solving min-max problem (\ref{eq2}).
Denote the value function by
$$\varphi(\x)=\max_{\y\in \Omega} f(\x,\y).$$

We consider a change of variables. Let
\[
\mc = \mU_\mc \Sigma_\mc \mv^\top_\mc
\]
be the singular value decomposition (SVD) of $\mc$. Define
\[
{\hat \vy} = \mv^\top_\mc \vy
\mbox{ and }
{\hat \mc} = \mU_\mc \Sigma_\mc.
\]
Then the QR factorization of ${\hat \mc}$ is $\mU_\mc \Sigma_\mc$ and the
R factor is therefore diagonal. We argue that replacing $\vy$ and $\mc$
with ${\hat \vy}$ and ${\hat \mc}$ is consistent with the modeling assumption
that the perturbation $\mc \vy$ be small and that rotating the coordinates
for $\vy$ does no harm. One can compare the new constraint
$\| {\hat \vy} \|_\infty \le \delta$ with the constraint on $\vy$
to see this by
\begeq
\label{eq:compare}
\| \vy \|_\infty = \| \mv_\mc {\hat \vy} \|_\infty
\le \| \mv_\mc \|_\infty \| {\hat \vy} \|_\infty \le \sqrt{r} \delta.
\endeq

So we will now assume that the change of variable has been made.

\begin{assumption}
\label{ass:qr}
$\mc$ has full column rank.
Let $\mc = \mq \mr$ be the QR factorization of $\mc$, where $\mr$ is
($r \times r$) diagonal with positive diagonal entries.
\end{assumption}

\begin{theorem}\label{thm2.1}
Let Assumption~\ref{ass:qr} hold.
Then the inner function $\varphi$ is given by (\ref{inner}).
\end{theorem}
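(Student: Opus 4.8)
The plan is to reduce the box-constrained maximization, by an orthogonal decomposition, to a separable family of one-dimensional problems that admit closed-form solutions. Since we work under Assumption~\ref{ass:qr}, write $\mc = \mq\mr$ with $\mq$ having orthonormal columns ($\mq$ is $m \times r$, $\mq^\top\mq = \mi$) and $\mr$ diagonal with positive entries $R_{jj}$; I will prove (\ref{inner}), equivalently (\ref{inner0}).

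First I would introduce the orthogonal projector $\mP = \mq\mq^\top$ onto the range of $\mc$. Because $\mP\mc = \mc$ and $\mP$ is an orthogonal projector, $(\mi-\mP)\F(\x)$ is orthogonal to $\mP\F(\x) - \mc\y$, so the Pythagorean theorem gives $\|\F(\x) - \mc\y\|^2 = \|(\mi - \mP)\F(\x)\|^2 + \|\mP\F(\x) - \mc\y\|^2$. The first summand does not depend on $\y$, hence $\varphi(\x) = \|(\mi-\mP)\F(\x)\|^2 + \varphi_0(\x)$ with $\varphi_0(\x) = \max_{\y\in\Omega}\|\mP\F(\x) - \mc\y\|^2$. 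Using $\mP = \mq\mq^\top$, $\mc = \mq\mr$, and the orthonormality of the columns of $\mq$, this inner term equals $\|\mq^\top\F(\x) - \mr\y\|^2$.

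Next, setting $\xi = \mq^\top\F(\x) \in \R^r$ and exploiting that $\mr$ is diagonal, the problem separates: $\varphi_0(\x) = \max_{\y\in\Omega}\sum_{j=1}^r (\xi_j - R_{jj}\y_j)^2 = \sum_{j=1}^r \max_{|\y_j|\le\delta}(\xi_j - R_{jj}\y_j)^2$. Each scalar subproblem maximizes a convex parabola over an interval, hence attains its maximum at an endpoint; since $R_{jj} > 0$ the maximizer is $\y_j = -\delta\sign(\xi_j)$ (either value when $\xi_j = 0$), with value $(|\xi_j| + R_{jj}\delta)^2$, so $\varphi_0(\x) = \sum_{j=1}^r (|[\mq^\top\F(\x)]_j| + R_{jj}\delta)^2$. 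Expanding the square and reassembling finishes the proof: $\sum_j |[\mq^\top\F(\x)]_j|^2 = \|\mq^\top\F(\x)\|^2 = \|\mP\F(\x)\|^2$ combines with $\|(\mi-\mP)\F(\x)\|^2$ to give $\|\F(\x)\|^2$; the cross term is $2\delta\sum_j R_{jj}|[\mq^\top\F(\x)]_j| = 2\delta\|\mr\mq^\top\F(\x)\|_1 = 2\delta\|\mc^\top\F(\x)\|_1$ since $\mc^\top = \mr\mq^\top$ with $\mr$ diagonal and positive; and the constant is $(\sum_j R_{jj}^2)\delta^2 = \|\mr\|_F^2\delta^2 = \|\mc\|_F^2\delta^2$ by orthonormality of the columns of $\mq$.

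The only genuinely nontrivial point is the \emph{separability}. Maximizing a convex quadratic over a box is NP-hard in general (see \cite{NP-hard}); what rescues the computation here is that the QR reduction — after the preliminary change of variables $\hat\y = \mv_\mc^\top\y$, $\hat\mc = \mU_\mc\Sigma_\mc$ — diagonalizes the quadratic term, so the coupling in the objective disappears and the $\ell_\infty$ box decouples into independent scalar problems. The remaining work is the Pythagorean splitting and routine algebra with the QR factors; one only needs to keep in mind that $\mq$ is $m\times r$ and need not be square, so $\mP = \mq\mq^\top$ is a projector while $\mq^\top\mq = \mi$, both facts being used above.
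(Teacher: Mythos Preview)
Your proof is correct and follows essentially the same route as the paper: project onto the range of $\mc$ via $\mP=\mq\mq^\top$, use the Pythagorean splitting, reduce $\|\mP\mf(\vx)-\mc\vy\|^2$ to $\|\mq^\top\mf(\vx)-\mr\vy\|^2$ via the QR factors, exploit the diagonality of $\mr$ to separate the maximization into scalar problems with closed-form optima, and then reassemble. The only addition is your explicit remark on why separability is the crux (the general box-constrained convex-quadratic maximization being NP-hard), which is a helpful gloss but not a different argument.
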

\begin{proof}
We note that the orthogonality of $\mq$ implies that
\[
\| \mr \|_F = \left( \sum_{j=1}^r \mr_{jj}^2 \right)^{1/2}
= \| \mc \|_F = \left( \sum_{i=1, j=1}^{m, r} \mc^2_{ij} \right)^{1/2}.
\]
Hence (\ref{inner}) is equivalent to
\begin{equation}\label{inner0}
\varphi(\x)=\|\F(\x)\|^2+2\delta \| \mc^\top \mf (\x)\|_1+
\| \mr \|_F^2 \delta^2.
\end{equation}

We will prove (\ref{inner0}).
Our task is to compute the inner function
\[
\varphi(\x) =  \max_{\vy \in \Omega} \| \mf(\vx) - \mc \vy \|^2.
\]

The orthogonal projection onto the range of $\mc$ is
\[
\mP = { \mq} { \mq}^\top
\]
which is an $m \times m$ matrix. Since $\mP$ is an orthogonal
projector and $\mP \mc = \mc$, we have
\begeq
\label{eq:decomp}
\| \mf(\vx) - \mc \vy \|^2 = \| (\mi - \mP) \mf(\vx) \|^2
+ \| \mP \mf(\vx) - \mc \vy \|^2
\endeq
by the Pythagorean theorem. The term $\| (\mi - \mP) \mf(\vx) \|^2$
does not depend on $\vy$ so we have reduced the problem to
computing
\begeq
\label{eq:reduce}
\varphi_0(\vx) = \max_{\vy \in \Omega} \| \mP \mf(\vx) - \mc \vy \|^2.
\endeq

Using the QR factorization again and the orthogonality
of ${ \mq}$ we get
\[
\begin{array}{ll}
\| \mP \mf(\vx) - \mc \vy \|^2 & = \| {  \mq} { \mq}^\top
\mf(\vx) - {  \mq} { \mr} \vy \|^2 \\
& = \| { \mq}^\top \mf(\vx) - { \mr} \vy \|^2.
\end{array}
\]

Setting $\xi = { \mq}^\top \mf(\vx)$ gives us the diagonal form
of the maximization problem
\begeq
\label{eq:final}
\varphi_0(\vx) = \max_{\vy \in \Omega} \| \xi - {\mr} \vy \|^2.
\endeq

We can solve this problem explicitly. Since $\mr$ is diagonal with
positive diagonal elements $\mr_{ii}$, we obtain
\[
\y_i =\delta  \left\{
\begin{array}{ll}
-1 & \mbox{if $\xi_i > 0$}\\
\{ -1, 1 \} & \mbox{if $\xi_i =0$}\\
1 & \mbox{if $\xi_i < 0$,}
\end{array}
\right. \quad i=1,\ldots,r.
\]
Hence, from $\xi = \mq^\top \mf(\vx)$, we have
\begeq
\label{eq:mmsol0}
\varphi_0(\vx) =
\sum_{j=1}^r (|[ \mq^\top \mf(\vx) ]_j| + \mr_{jj} \delta)^2.
\endeq
Here $[ \cdot  ]_i$ denotes the $i$th component of a vector if the
vector is a complicated expression, such as $\mq^\top \mf(\vx)$.

Since $\mP = \mq \mq^\top$ we have
\[
\sum_{j=1}^r |[ \mq^\top \mf(\vx) ]_j|^2
= \| \mq^\top \mf(\vx) \|^2 = \| \mP \mf(\vx) \|^2
\]
and so, using the orthogonality of $\mP$,
\begeq
\label{eq:mmsol}
\begin{array}{ll}
\varphi(\vx) & = \| (\mi - \mP) \mf(\vx) \|^2
+ \sum_{j=1}^r (|[ \mq^\top \mf(\vx) ]_j| + \mr_{jj} \delta)^2 \\
\\
& = \| (\mi - \mP) \mf(\vx) \|^2 + \| \mP \mf(\vx) \|^2 +
2 \delta \sum_{j=1}^r \mr_{jj}| [\mq^\top \mf(\vx) ]_j |+
( \sum_{j=1}^r \mr_{jj}^2 ) \delta^2 \\
\\
& = \| \mf(\vx) \|^2 + 2 \delta \| \mc^\top \mf(\vx) \|_1 +
\| \mr \|_F^2 \delta^2. \\
\end{array}
\endeq
The last equality is a consequence of the diagonality of $\mr$ and the fact
that $\mr \mq^\top = \mc^\top$.
\end{proof}

It is worth noting that  the inner function $\varphi$ is nonsmooth due to
the term $\| \mc^\top \mf(\vx) \|_1$, even if $\mf$ is differentiable.

{\bf Remark 2.1} From the expression (\ref{inner}), the function $\varphi$ has the following
properties.
\begin{description}
\item[(i)]  If $\F$ is continuously differentiable, then $\varphi$ is locally Lipschitz continuous, directionally differentiable, and its directional derivatives satisfy
$$\varphi(\x;\d)=\max_{\y\in \Omega^*(\x)} \d^\top \mf'(\x)^\top(\F(\x)-\y),$$
where $\mf'$ is the Jacobian of $\mf$ and $\Omega^*(\x)=$arg$\min_{\y\in \Omega}\| \mf(\vx) - \mc \vy \|^2.$  If $\Omega^*(\x)=\{\y^*(\x)\}$ is a singleton, then $\varphi$ is differentiable at $\x$ and
$\nabla \varphi(\x)= \mf'(\x)^\top(\mf(\x)-\y^*(\x)).$

\item[(ii)] If the set of minimizers of $\|\F(\x)\|^2$ is nonempty and bounded, then the set of minimizers of $\varphi$ is nonempty and bounded, which can be seen from the relation of the two level sets
$${\cal L}_\varphi:=\{\x \in \mathbb{R}^n :  \, \varphi(\x)\le \beta \} \subseteq {\cal L}_\F:=\{\x \in \mathbb{R}^n :  \, \|\F(\x)\|^2\le \beta \},$$
where $\beta>0$ is a scalar.
\item[(iii)] If $\|\mc^\top \F(\x)\|_1$ has a minimizer $\x^*$ and there is $\gamma>0$ such that
for any $\x\not\in \mx_\gamma:=\{ \x \in \mathbb{R}^n : \, \|\x-\x^*\|\le \gamma\}$,
$\|\F(\x)\|^2> \|\F(\x^*)\|^2$, then $\varphi$ has a minimizer in $\mx_\gamma$, which can be seen from the continuity of $\varphi$ and boundedness of $\mx_\gamma$ with
\begin{eqnarray*}
\varphi(\x)-\varphi(x^*)&=&\|\F(\x)\|^2
-\|\F(\x^*)\|^2+2\delta(\|\mc\F(\x)\|_1-\|\mc\F(\x^*)\|_1)\\
&\ge& \|\F(\x)\|^2-\| \F(\x^*)\|^2>0, \quad \x\not\in \mx_\gamma
\end{eqnarray*}
and
$$\min_{\x\in \mathbb{R}^n} \varphi(\x)\le \min_{\x\in \mx_\gamma} \varphi(\x)
\le \varphi(\x^*).$$
 The function $\|\mc^\top \F(\x)\|_1$ can have infinitely many minimizers, but only the minimizers at which $\|\F(\x)\|$ has small values
are interesting for the robust least squares problems.
For example, $\F(\x)=(\x, 1-\x)^\top$ for $\vx\in \mathbb{R}$ and $\mc\in \mathbb{R}^{2\times 2} $ is the identity matrix. Any point $\x^*\in [0,1]$ is a minimizer of $\|\F(x)\|_1$, but only at $\x^*=\frac{1}{2}$, $\|\F(\x^*)\|<\|\F(\x)\|$
for $\x\not\in \mx_\gamma=\{\x^*\}$.
\item[(iv)] If $\F$ is linear, then $\varphi$ is convex and $\x^*$ is a minimizer of $\varphi$ if and only if
\begin{equation}\label{xl1}
0\in \nabla \|\F(\x^*)\|^2 +2 \delta \partial \|\mc^\top F(\x^*)\|_1.
\end{equation}
If $0\in {\rm int}\partial \|\mc^\top \F(\x^*)\|_1$, then  there is $\bar{\delta}>0$ such that for any
$\delta\ge\bar{\delta}$, (\ref{xl1}) holds, and thus $\x^*$ is a solution of $\varphi$. (See Example 5.1).  Here int$\partial \|\mc^\top \F(\x^*)\|_1$ means that the interior of the Clarke subdifferential of $\|\mc^\top \F(\cdot)\|_1$ at $\x^*$.
 \end{description}

\begin{theorem}\label{minimaxp}
Let Assumption~\ref{ass:qr} hold. If $\varphi$ has a minimizer, then
problem (\ref{eq2}) has a global minimax point and $(\x^*, \y^*)$ is a global minimax point of  (\ref{eq2}) for any
$$\x^* \in \mathop{\mathrm{argmin}}_{\vx} \varphi(\x) \quad {\rm  and} \quad \y^*\in \mathop{\mathrm{argmax}}_{\y\in \Omega} f(\x^*, \y).$$
\end{theorem}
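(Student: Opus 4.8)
The plan is to verify directly that the pair $(\x^*,\y^*)$ satisfies the two defining inequalities of a minimax point, using the closed-form value function of the preceding theorem together with the compactness of $\Omega$. First I would record two elementary facts. Since $\Omega$ is compact and $f(\x^*,\cdot)$ is continuous, the set $\mathop{\mathrm{argmax}}_{\y\in\Omega}f(\x^*,\y)$ is nonempty, so a point $\y^*$ as in the statement exists and moreover $f(\x^*,\y^*)=\max_{\y\in\Omega}f(\x^*,\y)=\varphi(\x^*)$. By hypothesis $\x^*\in\mathop{\mathrm{argmin}}_{\x}\varphi(\x)$, so $\varphi(\x^*)=\min_{\x}\varphi(\x)$; in particular the quantity $\min_{\x}\max_{\y\in\Omega}f(\x,\y)$ that appears in the definition of a minimax point is well defined and is attained at $\x^*$.

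Next I would check the first inequality, $\max_{\y\in\Omega}f(\x^*,\y)\le f(\x^*,\y^*)$: by the choice of $\y^*$ this holds with equality, so nothing beyond the definition of $\y^*$ is needed. Then I would check the second inequality, $f(\x^*,\y^*)\le\min_{\x}\max_{\y\in\Omega}f(\x,\y)$; chaining the identities above gives $f(\x^*,\y^*)=\varphi(\x^*)=\min_{\x}\varphi(\x)=\min_{\x}\max_{\y\in\Omega}f(\x,\y)$, so this inequality also holds with equality. Combining the two inequalities shows $(\x^*,\y^*)$ is a minimax point of (\ref{eq2}); since such a pair exists whenever $\varphi$ has a minimizer, problem (\ref{eq2}) has a minimax point.

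The argument is short because the real work is already contained in the explicit formula (\ref{inner}) for $\varphi$ and in the fact that, by compactness of $\Omega$, the inner maximum is attained for every $\x$; there is no genuine obstacle. The one point worth stating carefully in the write-up is that the definition of a minimax point tacitly requires $\min_{\x}\varphi(\x)$ to exist, which is precisely the role of the hypothesis that $\varphi$ has a minimizer, and it is worth recalling, as in the Example of \S1.2, that such a minimax point need not be a saddle point, a local saddle point, or a local minimax point.
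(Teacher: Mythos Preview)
Your argument is correct and is essentially the paper's own proof: both verify the two defining inequalities of a minimax point directly from the identities $f(\x^*,\y^*)=\varphi(\x^*)$ and $\varphi(\x^*)=\min_\x\varphi(\x)=\min_\x\max_{\y\in\Omega}f(\x,\y)$, after noting that continuity of $f$ and compactness of $\Omega$ guarantee $\y^*$ exists. Your write-up is slightly more explicit about where the hypothesis ``$\varphi$ has a minimizer'' is used, and one small remark: neither your proof nor the paper's actually needs the closed-form expression (\ref{inner}) or Assumption~\ref{ass:qr} here, only the definition $\varphi(\x)=\max_{\y\in\Omega}f(\x,\y)$.
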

\begin{proof}
Let $\x^*$ be a minimizer of $\varphi(\x)$. Since $f$ is continuous and $\Omega$ is bounded,  there is $\y^*$ such that
\noindent
\begin{equation}\label{th22}
f(\x^*,\y^*)=\max_{\y\in \Omega}f(\x^*,\y).
\end{equation}

Now we show $(\x^*,\y^*)$ is a global minimax point of  (\ref{eq2}). From (\ref{th22}),
we have  $\max_{\y\in \Omega}f(\x^*,\y)\le f(\x^*,\y^*).$
On the other hand, we have
$$\min_\vx\max_{\vy\in \Omega} f(\vx, \vy)=\min_\vx\varphi(\vx)=\varphi(\vx^*)
=\max_{\vy\in \Omega} f(\vx^*, \vy)=f(\vx^*,\vy^*).$$

Hence $(\x^*, \y^*)$ is a global minimax point of  (\ref{eq2}).
\end{proof}


\section{Solution sets of problems (\ref{nls}) and (\ref{eq2}) }
\label{sec:stable}

In this section, we study some interesting relations between the nonlinear least squares problem (\ref{nls}) and the robust nonlinear least squares problem (\ref{eq2}) by using  (\ref{inner}). Denote the solution set of (\ref{nls}) and the set of minimizers of $\varphi$ defined in  (\ref{inner}), respectively, by
\[
\mx^* :=\mathop{\mathrm{argmin}}_\vx \,    \| \mf(\vx) \|^2
\quad {\rm and} \quad
\hat{\mx}:=\mathop{\mathrm{argmin}}_\vx  \,   \varphi(\vx).
\]

We call $\mx^*$ and $\hat{\mx}$ are the nonlinear least squares solution set and
the robust nonlinear least squares solution set, respectively. We call $\x^*\in \mx^*$ and $\hat{\x}\in \hat{\mx}$ are a nonlinear least squares solution  and
a robust nonlinear least squares solution, respectively.

\subsection{Zero residual}

Note that $\varphi({\hat \vx}) \le \varphi(\vx^*)$ and
$\| \mf(\vx^*) \|^2 \le \| \mf({\hat \vx}) \|^2$ for $\hat{\x}\in \hat{\mx}$ and
$\x^*\in \mx^*$. Hence,
\begeq
\label{eq:l1opt}
\begin{array}{ll}
\varphi({\hat \vx})& =
\| \mf({\hat \vx}) \|^2 + 2 \delta \| \mc^\top \mf({\hat \vx}) \|_1 +\| \mc \|_F^2 \delta^2\\
\\
& \le
\varphi({\vx^*}) =
\| \mf({\vx^*}) \|^2 + 2 \delta \| \mc^\top \mf({\vx^*}) \|_1 +\| \mc \|_F^2 \delta^2
\\
\\
& \le \| \mf({\hat \vx}) \|^2 + 2 \delta \| \mc^\top \mf({\vx^*}) \|_1+\| \mc \|_F^2 \delta^2.
\end{array}
\endeq
Therefore
\begeq
\label{eq:1vs2}
\| \mc^\top \mf({\hat \vx}) \|_1 \le \| \mc^\top \mf({\vx^*}) \|_1.
\endeq
When $\mc=\mi$, we can see that a least squares solution of (\ref{nls}) is not
necessarily a least 1-norm solution of $\min_\vx \|\mf(\vx)\|_1$.

When the equality holds in (\ref{eq:1vs2}), then we have $\|\mf({\hat \vx}) \|^2 =\| \mf({\vx^*}) \|^2$ and  $\varphi({\hat \vx}) =
\varphi({\vx^*})$. Hence the nonlinear least squares solution set and
the robust nonlinear least squares solution set are the same, that is  $\mx^*=\hat{\mx}$.
A special case is the zero residual problem.

\begin{proposition}
\label{th:nleq}
Let Assumption~\ref{ass:qr} hold. If $\mf(\vx^*) = 0$, then $(\vx^*, \delta\c)$ is a global minimax point of (\ref{eq2}), where $\c$ is a vector with
all elements being 1 or -1.
\end{proposition}
\begin{proof}
If  $\mf(\vx^*) = 0$ then
the nonsmooth term $
2 \delta \| \mc^\top \mf(\vx^*) \|_1 = 0.
$
Hence the two solution sets $\mx^*$ and $\hat{\mx}$ are the same from (\ref{eq:l1opt}). Moreover, from
$$\y^* \in \mathop{\mathrm{argmax}}_{\vy\in \Omega} \|\mf(\vx^*)-\mc^\top\vy  \|^2=\mathop{\mathrm{argmax}}_{\vy\in \Omega} \|\mc^\top\vy  \|^2,$$
\noindent
we have $\vy^*=\delta\c$, where $\c$ is a vector with
all elements being 1 or -1.
\end{proof}

However, as we saw in the example in subsection 1.1, $(\vx^*, \delta \c)$
may not be a saddle point, a local saddle point, a local minimax point or a stationary point of (\ref{eq2}).
Moreover, solutions of the first order
conditions (\ref{f-first}) or (\ref{first}) may not be global minimax points
of (\ref{eq2}).

\subsection{Error bounds: linear least squares problems}

We first consider the linear least squares problem with
\[
\mf(\vx) = \ma \vx - \vb
\]
where $\ma \in \mathbb{R}^{m \times n}$ and $\vb \in\mathbb{R}^m$. Let
\begin{equation}\label{LLS2}
\varphi_{\rm min}:= \min_\vx \varphi(\x):= \|\ma\vx - \vb\|^2 + 2\delta \|\mc^\top (\ma \vx - \vb) \|_1 +\|\mc\|^2_F\delta^2.
\end{equation}
Since the objective function of problem (\ref{LLS2}) is piecewise quadratic convex with nonnegative function values, problem (\ref{LLS2})
has a minimizer \citep{Frank}.

The subgradient of $\varphi$ at $ \x$ is as follows

$$\partial\varphi(\x)=2\ma^\top (\ma \vx- \vb) +  2\delta  \partial \|\mc^\top(\ma\vx - \vb) \|_1.$$

\noindent Let $\mb=\mc^\top\ma$ and $\vc=\mc^\top \vb$. Then

\begin{eqnarray*}
\partial\|\mc^\top(\ma\vx - \vb) \|_1&=&\sum_{\mb_i\vx-\vc_i>0} \mb_i^\top
- \sum_{\mb_i\vx-\vc_i<0}\mb_i^\top
+\sum_{\mb_i\vx-\vc_i=0} [\min (\mb_i^\top, -\mb_i^\top), \max(\mb_i^\top,-\mb_i^\top)]\\
&\subseteq& [-|\mb|^\top \ve, |\mb|^\top\ve],
\end{eqnarray*}
where $\mb_i$ is the $i$th row of $\mb$, $|\mb|=(|\mb_{i,j}|)\in \mathbb{R}^{r\times n} $, and $\ve\in \mathbb{R}^r$ is the vector with all elements being one.

\begin{theorem}
\label{th:lles}
For any given $\Gamma > 0$,  there is
$\gamma>0$ such that for any
$\vx^* \in \mx^*$ with $\varphi(\x^*)-\varphi_{\rm min}\le \Gamma$,
\begin{equation}\label{boundx}
{\rm dist} (\x^*,\hat{\mx}) \le 2\delta\gamma^2\sqrt{r} \|\mc^\top\ma\|.
\end{equation}
\end{theorem}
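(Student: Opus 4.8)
The plan is to combine an error bound for the convex piecewise quadratic function $\varphi$ with a first-order estimate of $\varphi(\vx^*)-\varphi_{\rm min}$ that exploits $\vx^*\in\mx^*$. The starting point is the stationarity of $\vx^*$ for \eqref{nls}: since $\ma^\top(\ma\vx^*-\vb)=0$, the smooth part of $\varphi$ contributes nothing to $\partial\varphi(\vx^*)$, and by the subdifferential sum rule every element of $\partial\varphi(\vx^*)$ has the form $2\delta\,\ma^\top\mc\vs$ for some $\vs$ with $\|\vs\|_\infty\le1$ (this is the description of $\partial\|\mc^\top(\ma\vx-\vb)\|_1$ recorded just before the theorem). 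Let $\hat{\vx}$ be the Euclidean projection of $\vx^*$ onto the nonempty closed convex set $\hat{\mx}$, so $\|\vx^*-\hat{\vx}\|={\rm dist}(\vx^*,\hat{\mx})$. The subgradient inequality for $\varphi$ at $\vx^*$, evaluated at $\hat{\vx}$, gives, for any such $\vs$,
$$\varphi(\vx^*)-\varphi_{\rm min}=\varphi(\vx^*)-\varphi(\hat{\vx})\le 2\delta\,\langle\ma^\top\mc\vs,\vx^*-\hat{\vx}\rangle=2\delta\,\langle\vs,\mc^\top\ma(\vx^*-\hat{\vx})\rangle,$$
and Hölder's inequality together with the equivalence of norms on $\mathbb{R}^r$ bounds the right-hand side by $2\delta\sqrt{r}\,\|\mc^\top\ma\|_\infty\,{\rm dist}(\vx^*,\hat{\mx})$. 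Thus $\varphi(\vx^*)-\varphi_{\rm min}$ is controlled linearly by the very quantity we want to bound.

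The second ingredient is the error bound for convex piecewise quadratic functions. The function $\varphi$ in \eqref{LLS2} is convex and piecewise quadratic — on each polyhedral piece it equals $\|\ma\vx-\vb\|^2$ plus an affine function, hence has Hessian $2\ma^\top\ma\succeq 0$ there — and its minimizer set $\hat{\mx}$ is nonempty. Such a function obeys a Hoffman/Li-type error bound on each of its sublevel sets: for every $\Gamma>0$ there is a constant $\gamma=\gamma(\Gamma)>0$ such that
$${\rm dist}(\vx,\hat{\mx})^2\le\gamma^2\,\big(\varphi(\vx)-\varphi_{\rm min}\big)\qquad\text{whenever }\varphi(\vx)\le\varphi_{\rm min}+\Gamma .$$
The dependence on $\Gamma$ is genuine, since far from $\hat{\mx}$ a convex piecewise quadratic function may grow only linearly, so no single constant can work on all of $\mathbb{R}^n$; this is exactly why the hypothesis $\varphi(\vx^*)-\varphi_{\rm min}\le\Gamma$ is needed. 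Applying this at $\vx=\vx^*$ and inserting the estimate from the first step,
$${\rm dist}(\vx^*,\hat{\mx})^2\le\gamma^2\big(\varphi(\vx^*)-\varphi_{\rm min}\big)\le 2\delta\gamma^2\sqrt{r}\,\|\mc^\top\ma\|_\infty\,{\rm dist}(\vx^*,\hat{\mx}) .$$
If ${\rm dist}(\vx^*,\hat{\mx})=0$ the bound \eqref{boundx} holds trivially; otherwise divide through by ${\rm dist}(\vx^*,\hat{\mx})$ to obtain \eqref{boundx} with this $\gamma$.

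I expect the main obstacle to be the error bound step. One must either invoke the appropriate result on error bounds for convex piecewise quadratic programs or prove it directly here, and, more delicately, one must make sure the constant $\gamma$ is uniform over the entire sublevel set $\{\vx:\varphi(\vx)\le\varphi_{\rm min}+\Gamma\}$. When $\ma$ is rank deficient this set is unbounded, but $\varphi$, $\mx^*$ and $\hat{\mx}$ are all invariant under translations by ${\rm null}(\ma)$, so the estimate should be read modulo ${\rm null}(\ma)$ — equivalently, after restricting to ${\rm range}(\ma^\top)$, on which the quadratic pieces of $\varphi$ are strongly convex and the argument goes through. The remaining pieces — the subdifferential sum rule, the subgradient inequality, and the elementary norm bookkeeping that yields the factor $2\delta\sqrt{r}\,\|\mc^\top\ma\|_\infty$ — are routine.
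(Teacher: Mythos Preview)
Your proposal is correct and follows essentially the same approach as the paper: invoke Li's error bound for convex piecewise quadratic functions (the paper cites \cite[Theorem~2.7]{WLi}) to get ${\rm dist}(\vx^*,\hat{\mx})\le\gamma\sqrt{\varphi(\vx^*)-\varphi_{\rm min}}$ on the sublevel set, then use the subgradient inequality together with the stationarity condition $\ma^\top(\ma\vx^*-\vb)=0$ to bound $\varphi(\vx^*)-\varphi_{\rm min}$ by $2\delta\sqrt{r}\,\|\mc^\top\ma\|_\infty\,{\rm dist}(\vx^*,\hat{\mx})$, and divide through. Your extra remarks on why the $\Gamma$-sublevel restriction is needed and on the rank-deficient case are useful commentary but do not deviate from the paper's line of argument.
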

\begin{proof}  From Theorem 2.7 in \citet{WLi}, we have
\begin{equation}\label{bound1}
{\rm dist} (\x^*,\hat{\mx}) \le \gamma\sqrt{\varphi(\x^*)-\varphi_{\rm min}},
\end{equation}
if  $\varphi(\x^*)-\varphi_{\rm min}\le \Gamma$.
Let $\bar{\x} \in \hat{\mx}$ such that $\|\x^*-\bar{\x}\|={\rm dist} (\x^*,\hat{\mx}).$
Since $\varphi$ is convex, we have
$$\varphi(\x^*)-\varphi(\bar{\x})
\le g(\x^*)^\top(\x^*-\bar{\x})\le\|g(\x^*)\|\|\x-\bar{\x}\|=\|g(\x^*)\|{\rm dist} (\x^*,\hat{\mx}),$$
where $g(\x^*)\in \partial \varphi(\x^*)$.

Since $\x^*\in \mx^*,$  we have $2\ma^\top (\ma \vx^*- \vb)=0$ and
\begin{eqnarray*}
\partial\varphi(\x^*)&=&2\ma^\top (\ma \vx^*- \vb) +  2\delta  \partial \|\mc^\top(\ma{\vx^*} - \vb) \|_1\\
&=& 2\delta  \partial \|\mc^\top(\ma{\vx^*} - \vb) \|_1\\
&\subseteq& 2\delta [-|\mb|^\top\ve, |\mb|^\top\ve].
\end{eqnarray*}

Hence from (\ref{bound1}), we find
$${\rm dist} (\x^*,\hat{\mx}) \le \gamma^2\|g(\x^*)\|\le 2\delta \gamma^2 \sqrt{r} \|\mc^\top\ma\|.$$
This completes the proof.
\end{proof}

From the first order necessary optimality conditions of (\ref{nls}) and (\ref{eq2}) with $\mf(\x)=\ma\x-\vb$,
we have
$$\ma^\top (\ma\x^*-\vb)=0, \quad  \ma^\top(\ma\hat{\x}-\vb-\mc\y)=0,$$
for $\x^*\in \mx^*$ and $\hat{\x}\in \hat{\mx}$, which implies
$$\ma^\top \ma(\x^*-\hat{\x})=\mc\y.$$

If  $\ma$ has full column rank, then
$$\|\x^*-\hat{\x}\| \le \|(\ma^\top\ma)^{-1}\mc\|\|\y\|\le \sqrt{r}\delta\|(\ma^\top\ma)^{-1}\mc\|.$$
Note that this error bound is the same $O(\delta)$ as (\ref{boundx}). However, Theorem \ref{th:lles} does not assume that $\ma$ has full column  rank.

In the case of overdetermined linear least squares problems, with $m\gg n>r$,
we often have $(\mc^\top(\ma\x^*-\vb))_i\neq 0,$ for $ i=1,\ldots, r$, where $\x^*$ is the
minimum norm solution of the linear least squares problem
$$\min \|\ma\x-\vb\|^2.$$
Let the SVD of $\ma$ be
\[
\ma = \mU_A \Sigma_A \mv_A^\top.
\]
Then the pseudo-inverse of $\ma$ is
\[
\ma^\dagger = \mv_A \Sigma_A^\dagger \mU_A^\top
\]
and the minimum norm solution of the linear least squares problem is
\[
\vx^* = \ma^\dagger \vb=\mv_A \Sigma_A^\dagger \mU_A^\top\vb=(\ma^\top\ma)^\dagger\ma^\top\vb.
\]
Let
\begin{equation}\label{definition-eta}
\eta=\min_{1\le i\le r}|\mc^\top(\ma\x^*-\vb)|_i.
\end{equation}

\begin{proposition}
If $\eta>0$, then for any positive scalar $\delta$ satisfying
\begin{equation}\label{eta}
\delta<\frac{\eta}{r\|\mc^\top(\ma^\top)^\dagger(\mc^\top\ma)^\top\|_1}
\end{equation}
the minimum norm solution of the robust linear least squares problem (\ref{LLS2}) is
\begin{equation}\label{smooth-bound0}
\hat{\x}= (\ma^\top\ma)^\dagger(\ma^\top\vb-\delta \sum_{i=1}^r(\mc^\top\ma)^\top_i{\rm sgn}(\mc^\top(\ma \x^*-b))_i),
\end{equation}
where  $(\mc^\top\ma)_i$ is the $i$th row of $\mc^\top\ma$ and {\text sgn} is the sign function.
\end{proposition}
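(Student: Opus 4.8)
The plan is to verify directly that the point $\hat{\vx}$ given by (\ref{smooth-bound0}) satisfies the convex optimality condition $\mathbf{0}\in\partial\varphi(\hat{\vx})$ for problem (\ref{LLS2}), and then that it is the minimum-norm element of $\hat{\mx}$; since $\varphi$ in (\ref{LLS2}) is convex and, as noted above, problem (\ref{LLS2}) has a minimizer, this is enough. For brevity set $\mathbf{u}=\mc^\top(\ma\vx^*-\vb)$, so that $|\mathbf{u}_i|\ge\eta>0$ for all $i$ by (\ref{definition-eta}); set $\mathbf{w}=\ma^\top\mc\,{\rm sgn}(\mathbf{u})=\sum_{i=1}^r(\mc^\top\ma)_i^\top\,{\rm sgn}(\mathbf{u})_i$, which lies in ${\rm range}(\ma^\top)$; and let $M=\mc^\top(\ma^\top)^\dagger(\mc^\top\ma)^\top$ be the $r\times r$ matrix appearing in (\ref{eta}). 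With this notation (\ref{smooth-bound0}) reads $\hat{\vx}=(\ma^\top\ma)^\dagger(\ma^\top\vb-\delta\mathbf{w})$.

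The key step is to express $\mc^\top(\ma\hat{\vx}-\vb)$ in terms of $\mathbf{u}$. Using the standard identities $\ma^\dagger=(\ma^\top\ma)^\dagger\ma^\top$ and $(\ma^\top)^\dagger=\ma(\ma^\top\ma)^\dagger$, the fact that $\ma\ma^\dagger=(\ma^\top)^\dagger\ma^\top$ is the orthogonal projector onto ${\rm range}(\ma)$, and $\ma\vx^*=\ma\ma^\dagger\vb$, one obtains $\ma\hat{\vx}=\ma\vx^*-\delta(\ma^\top)^\dagger\mathbf{w}$, hence
\[
\mc^\top(\ma\hat{\vx}-\vb)=\mathbf{u}-\delta\,\mc^\top(\ma^\top)^\dagger(\mc^\top\ma)^\top\,{\rm sgn}(\mathbf{u})=\mathbf{u}-\delta\,M\,{\rm sgn}(\mathbf{u}).
\]
Since ${\rm sgn}(\mathbf{u})\in\{-1,1\}^r$ has $\|{\rm sgn}(\mathbf{u})\|_1=r$, we get $\max_i\bigl|[M\,{\rm sgn}(\mathbf{u})]_i\bigr|\le\|M\,{\rm sgn}(\mathbf{u})\|_1\le r\|M\|_1$, so the smallness hypothesis (\ref{eta}) gives $\delta\,\bigl|[M\,{\rm sgn}(\mathbf{u})]_i\bigr|<\eta\le|\mathbf{u}_i|$ for every $i$. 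Therefore $\mc^\top(\ma\hat{\vx}-\vb)$ has no vanishing component and ${\rm sgn}\bigl(\mc^\top(\ma\hat{\vx}-\vb)\bigr)={\rm sgn}(\mathbf{u})$.

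It follows that $\partial\|\mc^\top(\ma\hat{\vx}-\vb)\|_1=\{\ma^\top\mc\,{\rm sgn}(\mathbf{u})\}=\{\mathbf{w}\}$ (the $\ell_1$ norm being differentiable away from the coordinate hyperplanes), so by the subgradient formula for $\varphi$ recorded above,
\[
\partial\varphi(\hat{\vx})=2\ma^\top(\ma\hat{\vx}-\vb)+2\delta\mathbf{w}=2\bigl(\ma^\top\ma(\ma^\top\ma)^\dagger(\ma^\top\vb-\delta\mathbf{w})-\ma^\top\vb+\delta\mathbf{w}\bigr)=\mathbf{0},
\]
the last equality because $\ma^\top\vb-\delta\mathbf{w}\in{\rm range}(\ma^\top)={\rm range}(\ma^\top\ma)$ is fixed by the projector $\ma^\top\ma(\ma^\top\ma)^\dagger$. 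Hence $\hat{\vx}\in\hat{\mx}$. To see $\hat{\vx}$ is the minimum-norm element, write $\varphi(\vx)=h(\ma\vx-\vb)$ with $h(\mathbf{z})=\|\mathbf{z}\|^2+2\delta\|\mc^\top\mathbf{z}\|_1+\|\mc\|_F^2\delta^2$ strictly convex; then $h$ has at most one minimizer over the affine set $\{\ma\vx-\vb:\vx\in\mathbb{R}^n\}$, so $\ma\vx=\ma\hat{\vx}$ for every $\vx\in\hat{\mx}$, i.e.\ $\hat{\mx}=\{\vx:\ma\vx=\ma\hat{\vx}\}$ is a coset of $\ker\ma$. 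Its minimum-norm element is the orthogonal projection of $\hat{\vx}$ onto ${\rm range}(\ma^\top)$, but $\hat{\vx}=(\ma^\top\ma)^\dagger(\ma^\top\vb-\delta\mathbf{w})$ already lies in ${\rm range}\bigl((\ma^\top\ma)^\dagger\bigr)={\rm range}(\ma^\top)$, so $\hat{\vx}$ equals its own minimum-norm representative.

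I expect the main obstacle to be the computation in the second paragraph: pushing the closed form (\ref{smooth-bound0}) through the pseudo-inverse algebra to the identity $\mc^\top(\ma\hat{\vx}-\vb)=\mathbf{u}-\delta M\,{\rm sgn}(\mathbf{u})$, and recognizing that hypothesis (\ref{eta}) is precisely the quantitative condition guaranteeing that the sign pattern (active-set structure) of $\|\mc^\top(\ma\vx-\vb)\|_1$ is unchanged on passing from the least-squares solution $\vx^*$ to $\hat{\vx}$. Once that is in hand, verifying the convex optimality condition and the minimum-norm property is routine.
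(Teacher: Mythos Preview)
Your proof is correct and follows essentially the same approach as the paper: rewrite $\hat{\vx}$ as $\vx^*$ plus a $\delta$-perturbation, compute $\mc^\top(\ma\hat{\vx}-\vb)=\mathbf{u}-\delta M\,{\rm sgn}(\mathbf{u})$, use the bound (\ref{eta}) to conclude the sign pattern is preserved, and then verify the first-order optimality condition $\nabla\varphi(\hat{\vx})=0$. Your minimum-norm argument via the strict convexity of $h(\mathbf{z})=\|\mathbf{z}\|^2+2\delta\|\mc^\top\mathbf{z}\|_1+\|\mc\|_F^2\delta^2$ and the observation $\hat{\vx}\in{\rm range}((\ma^\top\ma)^\dagger)={\rm range}(\ma^\top)=(\ker\ma)^\perp$ is in fact more complete than the paper's, which simply asserts the minimum-norm property after checking $\nabla\varphi(\hat{\vx})=0$.
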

\begin{proof}
First we show that
$\min_{1\le i\le r}|\mc^\top(\ma\hat{\x}-\vb)|_i\neq 0$,
which implies that  $\varphi$ is differentiable at $\hat{\x}$.
From the SVD of $\ma$ and the form of $\x^*$, we have
\begin{equation}\label{smooth-bound}
 \hat{\x}= \x^* - \delta(\ma^\top\ma)^\dagger\sum_{i=1}^r(\mc^\top\ma)^\top_i{\rm sgn}(\mc^\top(\ma \x^*-b))_i.
 \end{equation}
Hence for $i=1,\ldots, r$, we have
\begin{eqnarray*}
|\mc^\top(\ma\hat{\x}-\vb)-\mc^\top(\ma\x^*-\vb)|_i&=& \delta |\mc^\top\ma(\ma^\top\ma)^\dagger\sum_{i=1}^r(\mc^\top\ma)^\top_i{\rm sgn}(\mc^\top(\ma \x^*-b))_i|_i\\
&=& \delta |\mc^\top(\ma^\top)^\dagger\sum_{i=1}^r(\mc^\top\ma)^\top_i{\rm sgn}(\mc^\top(\ma \x^*-b))_i|_i\\
&\le & \delta \sum_{i=1}^r|\mc^\top(\ma^\top)^\dagger(\mc^\top\ma)^\top_i|_i\\
&\le& \delta r \|\mc^\top(\ma^\top)^\dagger(\mc^\top\ma)^\top\|_1\\
&<&\eta.
\end{eqnarray*}
By the definition of $\eta$ in (\ref{definition-eta}), we find that  for $i=1,\ldots, r$, $|\mc^\top(\ma\hat{\x}-\vb)|_i\neq 0$ and
$${\rm sgn}(\mc^\top(\ma \x^*-b))_i={\rm sgn}(\mc^\top(\ma \hat{\x}-b))_i.$$
Now we show $\hat{\x}$ is a minimum norm minimizer of $\varphi$. Since $\varphi$ is a convex function and differentiable at $\hat{\x}$, the first order optimality condition is necessary and sufficient for the optimality, which holds at $\hat{\x}$ as the follows
\begin{eqnarray*}
\nabla \varphi(\hat{\x})&=&
2\ma^\top(\ma \hat{\x}-b)+2\delta \sum_{i=1}^r(\mc^\top \ma)_i{\rm sgn}(\mc^\top(\ma \hat{\x}-b))_i\\
&=&2 \ma^\top\ma \hat{\x}- 2\ma^\top b+2\delta \sum_{i=1}^r(\mc^\top \ma)_i{\rm sgn}(\mc^\top(\ma \x^*-b))_i\\
&=&0,
\end{eqnarray*}
where we use (\ref{smooth-bound0}). Hence  $\hat{\x}$ is the minimum norm solution of (\ref{LLS2}).
\end{proof}

\vspace{0.02in}
\begin{remark}
If $\F(\x^*)=\ma\x^*-\vb\neq 0$, then we can choose a matrix $\mc\in \mathbb{R}^{m\times r}$ with rank$(\mc)=r$ such that $\eta= \min_{1\le i\le r}|\mc^\top(\ma\x^*-\vb)|_i>0$.
For example, let
$$\mc_{i,j}=\left\{\begin{array}{lr}
1 &  {\rm if} \,\, \F_{i}(\x^*)\ge 0, \,  \, i\neq j\\
-1 & {\rm if} \,\, \F_{i}(\x^*)< 0, \,  \, i\neq j\\
2r & {\rm if} \, \, \F_{i}(\x^*)\ge 0, \, \,  i=j\\
-2r &{\rm if}  \, \, \F_i(\x^*) <0, \,  \, i=j
\end{array}
\right.
$$
for $i=1,\ldots,m$ and $j=1,\ldots, r$.
 Then $\eta\ge \|\F(\x^*)\|_1$.
Moreover, the submatrix of the first $r$ rows of $\mc$ is strictly diagonally dominant. Hence rank$(\mc)=r$.
\vspace{0.02in}
\end{remark}

From (\ref{smooth-bound}), we also obtain an error bound
\begin{equation}\label{boundS}
{\rm dist}(\x^*, \hat{\mx})\le \|\x^*-\hat{\x}\|\le 2\delta r \|(\ma^\top\ma)^\dagger
(\mc^\top\ma)^\top\|_1
\end{equation}
 in a neighborhood of the minimum norm solution $\x^*$  of the linear least squares problem when $\varphi$ is differentiable at  $\x^*$.
 This special case
is different from the one in Equation (24)  of \citet[page 1044]{robustls}.
 Note that in \citet{robustls}, the set
$\Omega=\{\y \, : \, \|\y\|\le \delta\}$ is defined by the Euclidean norm and a linear least squares solution is a robust linear least squares solution.

\subsection{Error bounds: nonlinear least squares problems}

We use the second order growth condition in \citet[Section 4.4.1]{Shapiro}
to present an error bound from a nonlinear least squares solution to the robust nonlinear least squares
solution set.

We say that the second order growth condition holds at $\hat{\mx}$ if there exists a neighborhood ${\cal N}$ of $\hat{\mx}$ and a constant $\alpha>0$ such that
\begin{equation}\label{growth}
\varphi(\x)\ge \varphi_{\rm min} +\alpha [{\rm dist}(\x, \hat{\mx})]^2, \, \, \, \forall \x \, \in {\cal N},
\end{equation}
where  $\varphi_{\rm min}=\min_\x \varphi(\x).$  If $\varphi$ is a piecewise convex quadratic function, then (\ref{growth}) holds \citep{WLi}.

We say that  $\mc^\top\F$ is Lipschitz continuous in  ${\cal N}$ at $\hat{\mx}$ if there is $L>0$ such that
\begin{equation}\label{L}
\|\mc^\top(\F(\x)-\F(\z))\|\le L\|\x-\z\|,   \quad \quad \forall \x \, \in {\cal N},
\z \in \hat{\mx}.
\end{equation}

\begin{theorem}\label{thm3.4}
Suppose the  second order growth condition (\ref{growth}) and the Lipschitz continuity condition (\ref{L}) hold. Then for any $\x^*\in \mx^* \cap {\cal N}$, we have
\begin{equation}\label{boundN}
{\rm dist} (\x^*, \hat{\mx})\le 2\delta \sqrt{r}L \alpha^{-1}.
\end{equation}
\end{theorem}
\begin{proof}
Let $\hat{\x}\in \hat{\mx}$ such that $\|\x^*-\hat{\x}\|=$dist$(\x^*, \hat{\mx}).$
From condition (\ref{L}), we have
\begin{eqnarray*}
\varphi(\x^*)-\varphi(\hat{\x})&=&
\|\F(\x^*)\|+2\delta\|\mc^\top\F(\x^*)\|_1-\|\F(\hat{\x})\|-2\delta\|\mc^\top\F(\hat{\x})\|_1\\
&\le& 2\delta\|\mc^\top\F(\x^*)\|_1-2\delta\|\mc^\top\F(\hat{\x})\|_1\\
&\le& 2\delta \|\mc^\top(\F(\x^*)-\F(\hat{\x}))\|_1\\
&\le& 2\delta \sqrt{r}\|\mc^\top(\F(\x^*)-\F(\hat{\x}))\|\\
&\le& 2\delta \sqrt{r} L\|\x^* -\hat{\x}\|\\
&=& 2\delta \sqrt{r} L {\rm dist}(\x^*, \hat{\mx}),
\end{eqnarray*}
where the first inequality uses $\|\F(\x^*)\|\le \|\F(\hat{\x})\|.$
Together with the second order growth condition (\ref{growth}), we find the error bound (\ref{boundN}).
\end{proof}

In this section, we give upper-bounds of the distance between a least squares solution and the solution set of the robust least squares problem.  Theorem \ref{th:lles} does not reply on a quadratic growth condition and the differentiability.
On the other hand, Theorem \ref{thm3.4}  assumes  a quadratic growth condition and the inequality in (\ref{boundS}) relies on the differentiability of $\varphi$ at the least squares solution. 

\section{Smoothing methods and complexity for minimax problem (\ref{eq2})}
\label{smooth}

The inner maximization function $\varphi$ in (\ref{inner}) is  a nonconvex composite function with a structured nonsmooth term. In this section, we consider smoothing methods and complexity for minimax problem (\ref{eq2}). We consider the following minimization problem
\begin{equation}\label{NLS11}
\min_\vx  \psi(\vx):=\|\mf(\vx)\|^2 + 2 \delta\|\mc^\top \mf(\vx)\|_1,
\end{equation}
and assume that $\F: \mathbb{R}^n\to \mathbb{R}^m$ is a continuously differentiable function in this section.
If $\delta=0$, then problem (\ref{NLS11}) reduces to smooth nonlinear least squares problem (\ref{nls}).

\subsection{Smoothing function}
\label{subsec:smooth}

Using the smoothing convex function $\theta(t,\mu):=\sqrt{t^2+4\mu^2}$ with the smoothing parameter $\mu>0$ of $|t|$ \citep{XChen2012}, we define a smoothing function of  $\psi$   as follows
\[
\Psi(\vx,\mu)=\|\mf(\vx)\|^2 +2\delta\Theta(\mc^\top \mf(\vx),\mu),
\]
where
\begeq
\label{eq:Thetadef}
\Theta(\mc^\top \mf(\vx),\mu)
=\sum^r_{i=1}\theta((\mc^\top \mf(\vx))_i,\mu).
\endeq

For any fixed $\mu>0$, $\Theta(\cdot, \mu)$ is continuously differentiable and
\begin{equation}\label{mu}
0\le \Theta(\mc^\top \mf(\vx),\mu)-\|\mc^\top \mf(\vx)\|_1\le 2r\mu.
\end{equation}

Since $\mf$ is continuously differentiable, $\theta(\cdot, \mu)$ and $|\cdot|$ are convex, we have the gradient consistence \citep{XChen2012}
$${\rm con}\{\lim_{\x\to \bar{\x}, \mu\downarrow 0} \nabla \Theta(\mc^\top \mf(\vx),\mu)\}=\partial \|\mc^\top \mf(\bar{\vx})\|_1,$$
where ``con'' denotes the convex hull and $\partial \|\mc^\top \mf(\vx)\|_1$ is the Clarke gradient of  $\|\mc^\top \mf(\cdot)\|_1$  at $\vx$ \citep{Clarke}. Hence, we have
\begin{equation}\label{first-orderC1}
\begin{array}{ll}
{\rm con}\{{\displaystyle \lim_{\x\to \bar{\x}, \mu\downarrow 0}\nabla \Psi(\vx,\mu)}\}&
={\displaystyle
\lim_{\x\to \bar{\x}} 2\F'(\x)^\top \F(\x)+2\delta {\rm con}\{\lim_{\x\to \bar{\x}, \mu\downarrow 0}\nabla \Theta(\mc^\top \mf(\vx),\mu)}\}\\
&= 2\F'(\bar{\x})^\top \F(\bar{\x})+ 2\delta \partial \|\mc^\top \mf(\bar{\vx})\|_1=\partial \psi(\bar{\vx}).
\end{array}
\end{equation}

It is known that if $\x^*$ is a local minimizer of problem (\ref{NLS11}), then $0\in \partial \psi(\x^*)$ \citep{Clarke}.
A vector $\x^*$ is called a {\bf  Clarke stationary point} of $\psi$ if $0\in \partial \psi(\x^*)$.

From the gradient consistence  (\ref{first-orderC1}), if
$\x^*$ is a local minimizer of problem (\ref{NLS11}),  then there are sequences $\{\x_k\}$ and
$\{\mu_k\}$  such that
$$\lim_{\x_k\to \x^*, \mu_k\downarrow 0}\nabla \Psi(\vx_k,\mu_k)=0.$$

For a fixed $\mu>0$, a vector $\x_\mu$ is called a {\bf stationary point}  of $\Psi(\vx_\mu,\mu)$
if $\nabla \Psi(\vx_\mu,\mu)=0.$ From the gradient consistence, we have
$$\lim_{\mu\downarrow 0}\nabla \Psi(\vx_\mu,\mu)=0$$
and if $\{\x_\mu\}$ is bounded, then there is subsequence $\{\x_{\mu_k}\}$ of $\{\x_\mu\}$ such that
$\{\x_{\mu_k}\}$ converges to a stationary point of $\psi$ as $\mu_k \downarrow 0$.

\subsection{Complexity for  problem (\ref{NLS11})}
\label{subsec:nlsq}

Let the linearization of $\psi$ at $\x$ be defined as follows
$$\phi(\vx,\vs)=\|\mf(\vx)\|^2 + 2(\mf'(\vx)^\top\mf(\vx))^\top \vs + 2 \delta \|\mc^\top (\mf(\vx)+\mf'(\vx)\vs)\|_1.$$
An $\epsilon$ critical point of $\psi$  is defined in \citet{Toint2011} by the linearization of $\psi$ as follows.

We said that $\vx^*$ is an {\bf $\epsilon$ critical point } of $\psi$  if
\begin{equation}\label{order}
\phi(\vx^*,0)-\min_{\|\vs\|\le 1} \phi(\vx^*, \vs)
\le \epsilon.
\end{equation}
When $\epsilon=0$ in (\ref{order}),
 $\vx^*$ is called a {\bf  critical point } of $\psi$, that is,
\begin{equation}\label{first-orderT}
\min_{\|\vs\|\le 1} \phi(\vx^*, \vs)=\phi(\vx^*,0).
\end{equation}
We can verify  that  if $\vx^*$ is a local minimizer of $\psi$, then (\ref{first-orderT}) holds. Indeed, by Taylor's theorem, for any small $\alpha\ge0$, and $\vs\in \mathbb{R}^n$ with  $0<\|\vs\|\le 1$, we have
$$\psi(\vx^*+\alpha \vs)\le\phi(\vx^*,\alpha \vs)+o(\alpha) +o(\alpha)2\delta\|\vs\|=\phi(\vx^*,\alpha \vs)+o(\alpha),$$
and by the convexity of $\phi(\vx^*,\cdot)$, we have
$$\phi(\vx^*,\alpha \vs)=\phi(\vx^*,\alpha \vs +(1-\alpha) 0)\le \alpha\phi(\vx^*,\vs)+(1-\alpha)\phi(\vx^*,0).$$
These two inequalities with $\phi(\vx^*,0)=\psi(\vx^*)$ imply
$$0\le \psi(\x^*+\alpha \vs)-\psi(\x^*)\le  \phi(\vx^*,\alpha \vs)+o(\alpha)-\psi(\x^*)\le
\alpha(\phi(\vx^*,\vs)-\psi(\vx^*)) +o(\alpha).$$
Hence if $\vx^*$ a local minimizer of $\psi$, then  a minimizer $\vs^*$ of (\ref{first-orderT}) must satisfy
$\phi(\vx^*, \vs^*)=\phi(\vx^*,0).$
Moreover, using the convexity of $\phi(\vx^*,\cdot)$ again, we have
\begin{equation}\label{first-orderCT}
0\in \partial \phi(\vx^*,0)= 2\mf'(\vx^*)^\top\mf(\vx^*) + 2\delta \partial \|\mc^\top \mf(\vx^*)\|_1=\partial \psi(\vx^*).
\end{equation}
 Hence $\x^*$ is a critical point of $\psi$ if and only if $\x^*$ is a Clarke stationary point.

 \begin{proposition}\label{complexity}
 Let $\delta >0$ and $\mu\le \epsilon/(8r\delta)$. If $\|\nabla \Psi(\x^*_\mu,\mu)\|$  $\le \epsilon/2$, then
 \begin{equation}\label{pro4-1}
 \phi(\x^*_\mu,0)-\min_{\|\vs\|\le 1} \phi(\vx^*_\mu, \vs)\le \epsilon,
 \end{equation}
 that is, $\x^*_\mu$ is an $\epsilon$ critical point of $\psi$.
 \end{proposition}
 \begin{proof} We verify (\ref{pro4-1})  as follows. Let
 $\vs_\mu\in \mathop{\mathrm{argmin}}_{\|\vs\|\le 1} \phi(\vx^*_\mu, \vs).$ We have
 \begin{eqnarray*}
 &&\phi(\x^*_\mu,0)-\phi(\x^*_\mu, \vs_\mu)\\
 &&=\psi(\x^*_\mu)-\psi(\x^*_\mu)-2(\mf'(\vx^*_\mu)^\top\mf(\vx^*_\mu))^\top \vs_\mu - 2 \delta \|\mc^\top (\mf(\vx^*_\mu)+\mf'(\vx^*_\mu)\vs_\mu)\|_1 + 2\delta \|\mc^\top \mf(\vx^*_\mu)\|_1\\
 &&\le -2(\mf'(\vx^*_\mu)^\top\mf(\vx^*_\mu))^\top \vs_\mu - 2 \delta \Theta(\mc^\top (\mf(\vx^*_\mu)+\mf'(\vx^*_\mu)\vs_\mu),\mu) + 2\delta \Theta(\mc^\top \mf(\vx^*_\mu),\mu) +4\delta r\mu\\
 &&\le -\nabla \Psi(\x^*_\mu,\mu)^\top\vs_\mu + 4r\mu \delta\\
 &&\le \|\nabla \Psi(\x^*_\mu,\mu)\|\|\vs_\mu\| +4r\mu\delta \le \epsilon,
 \end{eqnarray*}
 where the first inequality uses  (\ref{mu}), the second inequality uses the
 convexity and twice continuously differentiability  of $\Theta$ and the last inequality uses $\|\vs_\mu\|\le 1$
 and the assumption of this proposition.
 \end{proof}

Suppose  $\mf'(\cdot)^\top\mf(\cdot)$ and $\mf'(\cdot)$ are Lipschitz continuous with
constants $L_1$ and $L_2$. Then the gradient of  $\Psi$ is Lipschitz continuous
with $L_\Psi:=\gamma(L_1+\delta L_2\mu^{-1})$ where $\gamma$ is a constant which is independent
of $\x$ and $\mu$.  Hence  we can find an $\epsilon$ stationary point of the continuously differentiable function $\Psi(\cdot, \mu)$ for any given $\mu>0$ at most $O(L_\Psi\epsilon^{-2})$ function evaluations using standard steepest descent methods with linesearch or trust-region safeguards \citep{Toint2011,Toint2012,Nesterov}. Hence from Proposition \ref{complexity}, we can choose $\mu=\epsilon/(8r\delta)$ and find $\x^*_\mu$ such that
$\|\Psi(\x^*_\mu, \mu)\|\le \epsilon/2$ at most $O(L_\Psi\epsilon^{-2})$ function and gradient evaluations, which means that we can find
an $\epsilon$ critical
 point of the nonsmooth function $\psi$ at most
 $O(\epsilon^{-2}+\delta^2r\epsilon^{-3})$ evaluations of $\mf$ and $\mf'$
 via a smoothing function. When $r$ is independent of dimension $n, m$, the complexity is $O(\epsilon^{-2}+\delta^2\epsilon^{-3})$, which is independent of the dimension $m,n$.
Moreover, if $\epsilon \ge \delta^2$, then the complexity bound is $O(\epsilon^{-2})$, which is the lower bound for finding stationary points of smooth nonconvex functions \citep{Carmon2020,Toint2011} and independent of the dimension of the problem.
One of the advantages of smoothing methods is to
make use of efficient optimization software for solving
smooth optimization problems.
\vspace{0.05in}

\begin{remark}
In \citet{Toint2011}, Cartis et al propose a trust region method for a composite nonsmooth nonconvex optimization problem
\begin{equation}\label{composite}
\min_\x f(\x)+h(c(\x)),
\end{equation}
where $f:\mathbb{R}^n\to \mathbb{R}$ and  $c:\mathbb{R}^n\to \mathbb{R}^r$ are continuously differentiable
and $h:\mathbb{R}^r\to \mathbb{R}$ is convex. In \citet{NLS1}, Gratton et al generated the results in \citet{Toint2011} to inexact function values, and
  proposed an adaptive regularization algorithm for problem (\ref{composite}). Problem
(\ref{NLS11}) is a special case of (\ref{composite})  and satisfies Assumption
AS.1-AS.4 in \citet{NLS1} when $\F$ is continuously differentiable and the level set
$\{\x \, : \, \psi(\x)\le \psi(\x_0)\}$ is bounded, where $\x_0$ is the initial point of the algorithm. Hence it is interesting to develop efficient codes to implement the algorithms in \citet{Toint2011,NLS1} for finding  an
$\epsilon$ critical point  $\vx$ satisfying (\ref{order}), which needs at most $O(\epsilon^{-2})$
evaluations of $\mf$ and $\mf'$.
\end{remark}

\subsection{Complexity for minimax problem (\ref{eq2})}
For a general min-max problem
\begin{equation}\label{Gminmax}
\min_{\x \in \mathbb{R}^n} \max_{\y\in {\cal Y}} f(\x,\y),
\end{equation}
where ${\cal Y}$ is a convex set, the widely used first order optimality condition is
\begin{equation}\label{Gfirst}
\nabla_\x f(\x,\y)=0, \quad \quad \y={\rm Proj}_{{\cal Y}}(\y+ \alpha \nabla_\y f(\x,\y)),
\end{equation}
where $\alpha$ is a positive scaler. The system of nonlinear equations (\ref{Gfirst})  is a necessary condition for local minimax points of (\ref{Gminmax}) \citep{Jiang-Chen,Jin}.  Many algorithms have been developed to find stationary points satisfying (\ref{Gfirst}) \citep{D-Jordan,Jin,YangJ}. However, as shown in Subsection 1.2,   minimax problem (\ref{eq2}) does not have a local minimax point. Although problem (\ref{eq2}) has stationary points and minimax points, the set of stationary points and the set of minimax points do not have a common point. Naturally, we have to define a meaningful
stationary point when the min-max problem does not have a local minimax point.

\begin{definition}\label{first-order2}
We say $(\hat{\vx}, \hat{\vy})$ is an {\bf   $\epsilon$ minimax critical point}
of the min-max problem (\ref{eq2})
if $(\hat{\x}, \hat{\y})$ satisfies
\begin{eqnarray}\label{order2}
&&\varphi (\x,0)-\min_{\|\vs\|\le 1} \varphi(\vx, \vs)
\le \epsilon,\\
&&\y\in \mathop{\mathrm{argmax}}_{\|\y\|_\infty\le \delta} \|\C^\top(\F(\x)-\C\y)).\label{order3}
\end{eqnarray}

We say $(\hat{\vx}, \hat{\vy})$ is a {\bf minimax critical point}
of the minimax problem (\ref{eq2}) if  $\epsilon=0$ in (\ref{order2}).
\end{definition}

\begin{proposition}\label{first-order3}
If $(\hat{\x}, \hat{\y})$ is a global minimax point of problem (\ref{eq2}), then
(\ref{order2}) holds with $\epsilon=0$, that is $(\hat{\vx}, \hat{\vy})$ is a minimax critical point
of problem (\ref{eq2}).
\end{proposition}
\begin{proof} By the definition of a global minimax point of (\ref{eq2}), we have
$$
\max_{\y\in \Omega} \|\mf(\hat{\vx})-\mc\vy  \|^2\le
\|\mf(\hat{\vx})-\mc\hat{\vy}  \|^2
\le \min_\x \max_{\y\in \Omega} \|\mf(\vx)-\mc\vy  \|^2.
$$
Hence, we find that $\hat{\x}$ is a minimizer of $\varphi$ as follows,
$$\varphi(\hat{\x})=\max_{\y\in \Omega} \|\mf(\hat{\vx})-\mc\vy  \|^2\le
\|\mf(\hat{\vx})-\mc\hat{\vy}  \|^2
\le \min_\x \max_{\y\in \Omega} \|\mf(\vx)-\mc\vy  \|^2=\min_\x \varphi(\x).$$
Following the discussion in Subsection 4.2, $\hat{\x}$ is a critical point of
$\varphi$, and thus (\ref{order2}) holds with $\epsilon=0$.

From $\max_{\y\in \Omega} \|\mf(\hat{\vx})-\mc\vy  \|^2\le
\|\mf(\hat{\vx})-\mc\hat{\vy}  \|^2$, we have  $\hat{\y} \in \mathop{\mathrm{argmax}}_{\vy\in \Omega} \|\mf(\hat{\vx})-\mc^\top\vy  \|^2$.
\end{proof}

From Theorem \ref{thm2.1}, if
Assumption~\ref{ass:qr} holds,
then the inner function $\varphi$ is given by (\ref{inner}). From Proposition 4.1, we can find a $\hat{\x}$ such that (\ref{order2}) holds after at most
$O(\epsilon^{-2}+\delta^2\epsilon^{-3})$ evaluations of $\mf$ and $\mf'.$
Moreover, by Assumption \ref{ass:qr}, $\C^\top \C$ is a diagonal matrix with positive diagonal elements, which implies that we can find $\hat{\y}$ satisfying (\ref{order3}) by
\[
\hat{\y}_i = \delta \left\{
\begin{array}{ll}
-1 & \mbox{if $(\C^\top \mf(\hat{\x}))_i > 0$}\\
\{ -1, 1 \} & \mbox{if $(\C^\top \mf(\hat{\x}))_i=0$}\\
1 & \mbox{if $(\C^\top \mf(\hat{\x}))_i< 0$,}
\end{array}
\right.  \quad i=1,\ldots,r.
\]

Hence, we can have the following theorem for complexity bound of the min-max problem (\ref{eq2}).
\begin{theorem}
Suppose Assumption \ref{ass:qr} holds. We take at most
$O(\epsilon^{-2}+\delta^2\epsilon^{-3})$ evaluations of $\mf$ and $\mf'$ to find an $\epsilon$  minimax critical point of the min-max problem (\ref{eq2}) by a steepest descent method with linesearch or trust-region for minimizing $\Psi(\x, \mu)$   with $\mu=\epsilon/(8r\delta)$.
\end{theorem}

 The complexity of finding a near-stationary
point of the inner maximization function has been studied in some literature for minimax optimization problems.   For example, the complexity results on a two-time-scale gradient descent ascent algorithm (GDA) for solving nonconvex concave minimax problems is provided  in \citep{TLin}, and the complexity results on alternating GDA and smoothed GDA is proved under the Polyak-{\L}ojasiewicz condition in \citep{JYang}.

\section{Least squares solutions vs robust least squares solutions in the worst-case residual errors}\label{LsvRLs}

We call $\x_{ls}$ and $\x^\lambda_{rls}$ a least squares solution
and a $\lambda$-robust least squares solution, respectively if
\[
\x_{ls} \in \mathop{\mathrm{argmin}}\|\mf(\x)\|^2
\]
and
\[
\x^\lambda_{rls} \in \mathop{\mathrm{argmin}}
\|\mf(\x)\|^2+2\lambda \|\mc^\top \mf(\x)\|_1.
\]

In this section, we compare least squares solutions and
$\lambda$-robust least squares solutions in the worse-case residual
error, that is,
\[
Er_\delta(\x_{ls})= \max_{\|\y\|\le \delta} \|\mf(\x_{ls})-\mc^\top\y\|^2
\]
and
\[
Er_\delta(\x^\lambda_{rls})= \max_{\|\y\|\le \delta}
\|\mf(\x^\lambda_{rls})-\mc^\top\y\|^2
\]
for $\delta\ge 0$.

 Under Assumption \ref{ass:qr}, the maximization
problem $\max_{\|\y\|\le \delta} \|\mf(\vx)-\mc^\top\y\|^2$
has a closed form solution
\[
\y_i =\delta  \left\{
\begin{array}{ll}
-1 & \mbox{if $(\mq^\top \mf(\vx))_i \ge 0$}\\
1 & \mbox{otherwise}
\end{array}
\right. \quad i=1,\ldots,r,
\]
where $\mq$ is the orthogonal matrix in the QR decomposition of $\mc=\mq\mr.$
 Moreover, by Theorem \ref{thm2.1}, we have
\[
Er_\delta(\x_{ls})=
\|\F(\x_{ls})\|^2+2\delta \| \mc^\top \mf (\x_{ls})\|_1+
\| \mc \|_F^2 \delta^2
\]
and
\[
Er_\delta(\x^\lambda_{rls})=
\|\F(\x^\lambda_{rls})\|^2+
2\delta \| \mc^\top \mf (\x^\lambda_{rls})\|_1+
\| \mc \|_F^2 \delta^2,
\]
which implies that  the worse-case residual errors of a least squares solution  $\x_{ls}$ and a $\lambda$-robust least squares solution $\x^\lambda_{rls}$ have the following difference
\begeq
\label{eq:slopeform}
\begin{array}{ll}
\Delta_\lambda(\delta) & =Er_\delta(\x_{ls})-Er_\delta(\x^\lambda_{rls})=
\|\F(\x_{ls})\|^2-\|\F(\x^\lambda_{rls})\|^2  \\
\\
&+ 2\delta (\| \mc^\top \mf (\x_{ls})\|_1
-\| \mc^\top \mf (\x^\lambda_{rls})\|_1).
\end{array}
\endeq
Hence, we have the following Proposition.
\begin{proposition}\label{rls-ls}
If $\x_{ls}$ is not a $\lambda$-robust least squares solution,
then $\Delta_\lambda(\delta)$ increases linearly with respect
to $\delta$ by the rate
$2(\| \mc^\top \mf (\x_{ls})\|_1
-\| \mc^\top \mf (\x^\lambda_{rls})\|_1),$ and $\Delta_\lambda(\delta)>0$ when
\begin{equation}\label{Delta}
\delta > \frac{\|\F(\x^\lambda_{rls})\|^2-\|\F(\x_{ls})\|^2}%
{2(\| \mc^\top \mf (\x_{ls})\|_1
-\| \mc^\top \mf (\x^\lambda_{rls})\|_1)}.
\end{equation}
\end{proposition}

\begin{proof}
If $\x_{ls}$ is not a minimizer of
$\|\mf(\x)\|^2+2\lambda \| \mc^\top \mf (\x)\|_1$, then
\[
0\ge \|\F(\x_{ls})\|^2-\|\F(\x^\lambda_{rls})\|^2 > 2\delta (\| \mc^\top \mf (\x^\lambda_{rls})\|_1
-\| \mc^\top \mf (\x_{ls})\|_1),
\]
where we use the definition of least square solution $\x_{ls}$ and
$\lambda-$robust least square solution $\x^\lambda_{rls}$. Hence we have
$ \| \mc^\top \mf (\x_{ls})\|_1
-\| \mc^\top \mf (\x^\lambda_{rls})\|_1>0$,
and thus $\Delta_\lambda(\delta)$ increases linearly with
respect to $\delta$ by the rate
$2(\| \mc^\top \mf (\x_{ls})\|_1
-\| \mc^\top \mf (\x^\lambda_{rls})\|_1)$
and $\Delta_\lambda(\delta)>0$  when (\ref{Delta})  holds.
\end{proof}

\subsection{Simple example}
\label{subsec:example51}

To illustrate Proposition \ref{rls-ls}, we consider a simple example.

\noindent
{\bf Example 5.1} Let
\[
\mf(\x)=\ma\x-\b, \quad \ma=[1, 1, 1]^\top,
\quad \vb=[1, 2, 4]^\top, \quad \mc=\mi.
\]

By a simple calculation, we find that  $\x_{ls}=7/3$,
and $\x^1_{rls}=2$, $\x^{0.2}_{rls}=2.2$ with $\lambda=1$
and $\lambda=0.2$, respectively. Moreover,  we have
\[
\frac{\|\F(\x^1_{rls})\|^2-\|\F(\x_{ls})\|^2}{2(\|\mf (\x_{ls})\|_1
-\|\mf (\x^1_{rls})\|_1)}=\frac{5-42/9}{2(10/3-3)}=\frac{1}{2}, \quad \quad
\quad \Delta_1(\delta)=\frac{2}{3}\delta-\frac{1}{3}
\]
and
\[
\frac{\|\F(\x^{0.2}_{rls})\|^2-\|\F(\x_{ls})\|^2}{2(\|\mf (\x_{ls})\|_1
-\|\mf (\x^{0.2}_{rls})\|_1)}=\frac{4.72-42/9}{2(10/3-3.2)}=0.2, \quad
\quad  \quad \Delta_{0.2}(\delta)=\frac{0.8}{3}\delta-\frac{0.16}{3}.
\]

It is interesting to observe that
\[
0\in \ma^\top(\ma \x^1_{rls} -\vb) +2\lambda \partial \|\ma \x^1_{rls}-\vb\|_1
\]
for any $\lambda\ge 1.$ This means that
$\x^\lambda_{rls}=\x^1_{rls}=2$ for all $\lambda \ge 1.$
In fact, for robust linear least squares problem, if  $\x^*$ is the minimizer of $\|\mc^\top \F(\x^*)\|_1$ and $0\in {\rm int} \partial
\|\mc^\top \F(\x^*)\|_1,$  then there is $\bar{\lambda}>0$ such that
$\x^*$ is a $\lambda$-least squares solution for
all $\lambda \ge \bar{\lambda}$.
 See (iv) of Remark 2.1.

\subsection{Integral equations with uncertain data}
\label{subsec:nlinteg}

We consider the nonlinear integral equation
\begeq
\label{eq:nleq}
\calg (u)(x) \equiv \int_0^1 g(x,y) \phi(u(y)) \dy = f(x),
\endeq
where $g$ is the
Greens function
\[
g(x,y) =
    \left\{\begin{array}{c}
        y (1-x), \quad {\rm if} \,\, \ x > y\\
        x (1-y), \quad {\rm otherwise.}
    \end{array}\right.
\]
The operator $G$
\begeq
\label{eq:gdeff}
G u(x) \equiv \int_0^1 g(x,y) u(y) \dy
\endeq
is the inverse of the negative second derivative
operator with homogeneous Dirichlet boundary conditions. So if $w = G v$, then
\[
-w'' = v, \quad w(0) = w(1) = 0.
\]

In our example
\begeq
\label{eq:nlphi}
\phi(u) = \frac{\sin(\pi u) + u^3}{1 + u^2}.
\endeq
We define
\[
f(x) = G \phi(u^*(x)) \quad {\rm with} \quad   u^*(x)=|x-0.025|
\]
for the example.
We add an $H^1$ norm regularization to stabilize the problem, so the
minimization problem becomes
\[
\min \| G(\phi(u)) - f \|_{L_2}^2 +
\alpha \| d u /dx \|_2^2.
\]
We restrict $u$ to be in
the span of a low dimensional space by
\[
u(x) = \sum_{i=1}^{n} p_j T_{i-1}(x) \equiv \calt \vp
\]
where $T_i$ is the $i$th degree Chebyshev polynomial on $[0,1]$.
So, letting $\calt$ be the projection to the span of the first $n$
Chebyshev polynomials our infinite dimensional problem is to minimize
the $L^2$ norm of
\begeq
\label{eq:infdim}
\calf(\vp) = \left(
\begin{array}{l}
G(\phi({\calt} \vp)) - f \\
\sqrt{\alpha} d({\calt} \vp)/dx
\end{array}
\right)
\endeq
over $\vp \in R^n$.

We discretize the integral operator $G$ in \eqnok{nleq} with $\vu$ by the
composite trapezoid rule on a grid with $m_x$ points to obtain a
discrete operator $\mg_h$.
The discrete operator becomes more ill-conditioned as the number of
quadrature points increases, but remains uniformly bounded in
the operator norm. Similarly we let
$\mt$ be the $m_x \times n$ matrix
whose the columns are the Chebyshev
polynomials evaluated at the grid points.
In our computations we let
$m_x = 1000$ and $n=10$.

For the discrete problem (\ref{nls}) we see that
$\mf$ in the least squares residual (\ref{nls}) becomes
\[
\mf(\vp) = \left( \begin{array}{c}
\mg_h ( \phi ( \mt \vp ) ) - \vf_h \\
\sqrt{\alpha} D_x \mt \vp
\end{array}
\right)
\]
Hence $m = 2 m_x$. In our computations we approximate the first derivative
with a simple one-sided difference $D_x$. 
We set $m_x = 1000$ and so $m = 2 m_x = 2000$.
Finally, we note that only the original least squares residual
$G(\phi({\calt} \vp)) - f$ has potential errors in the data. Hence,
we apply the robust correction only to that part of $\calf$. This has
the effect of setting
\[
\mc = [\mi_{m_x \times m_x}, 0_{m_x \times m_x} ]^\top
\]
with $r = m_x$.
We used $n=10$, $\mu = 10^{-8}$, and $\alpha = .01$
in the computations.

We use the Optimization.jl \citep{optimizationjl}
Julia \citep{Juliasirev} package to
interface to a BFGS optimization
for the minimization problem

\begeq
\label{eq:sminner}
\min_\vp \Psi_\lambda(\vp,\mu):= \|\mf(\vp)\|^2+2\lambda
\Theta(\mc^\top \mf(\vp), \mu),
\endeq
where we approximate  $\|\mc^\top \F(\vp)\|_1$
using the smoothing function
$\theta(t,\mu)=\sqrt{t^2+4\mu^2} $ of $t$ as in  \eqnok{Thetadef}.
In our computation, we used the Zygote \citep{zygote}
automatic differentiation package
to compute gradients.

In Table~\ref{tab:tab1} we show for several values of $\lambda$ the
function values and gradient norms upon termination of the optimizer.
We also report the computed value of the slope 
\[{\rm slope}(\vp_{ls},\vp^\lambda_{rls})=
2 (\| \mc^\top \mf (\vp_{ls})\|_1 -\| \mc^\top \mf (\vp^\lambda_{rls})\|_1)
\]
to show how the slope rapidly approaches a constant value as
$\lambda$ increases, which verifies the theoretical results of Proposition 5.1 with $\x$ being replaced by $\vp$ in \eqnok{slopeform}. This is also illustrated by Figure~\ref{fig:robust01}.

{\color{red}
\begin{table}[h!]
\caption{\label{tab:tab1} Values of $\Psi_\lambda$ and
$\| \nabla \Psi_\lambda \|$ at computed minimizers}
\centerline{
\begin{tabular}{cccc}
$\lambda$ &$\Psi_\lambda$ &$\| \nabla \Psi_\lambda \|$ &    slope$(\vp_{ls},\vp^\lambda_{rls})$\\
\hline
0.00e+00 & 2.21e-02 & 1.14e-12 & 0.00e+00   \\
1.00e-01 & 8.30e-01 & 2.27e-12 & 7.27e-01   \\
1.00e+00 & 5.05e+00 & 2.49e-11 & 6.15e+00   \\
5.00e+00 & 7.72e+00 & 2.44e-09 & 8.16e+00   \\
1.00e+01 & 8.85e+00 & 6.24e-10 & 8.27e+00   \\
1.00e+02 & 1.25e+01 & 5.69e-08 & 8.44e+00   \\
\hline
\end{tabular}
}
\end{table}
}

In this example, we see that if we set $
{\hat \epsilon} (\lambda) =  8 r \lambda \mu  = 8 \lambda 10^{-5}
$, then
$
{\hat \epsilon} (\lambda) \le {\hat \epsilon}^* = 8 \times 10^{-5}
$
for any $0 <\lambda \le 1$. Hence
we have an ${\hat \epsilon}^*$ minimax point for any $0 < \lambda \le 1$.

We follow the procedure at the beginning of \S 5 to compute the worst-case residual errors
\[
Er_\delta(\vp) =  \| \mf (\vp) \|^2_2 +
2 \delta \| \mc^\top \mf(\vp) \|_1 + (m \delta)^2
\]
at the least square solution $\vp_{ls}$ and the robust least squares solution
$\vp^\lambda_{rls}$.

In Figure~\ref{fig:robust01}, we
plot $\Delta_\lambda(\delta)=Er_\delta(\vp_{ls})  - Er_\delta(\vp_{rls}^\lambda)$ as a function
of $\delta$ and five values of $\lambda$.
Figure~\ref{fig:robust01} (a) focuses on small values of $\delta$ to show the
behavior near $\delta = 0$ and Figure~\ref{fig:robust01} (b) illustrates the
behavior for larger $\delta$. The slopes for the larger values of
$\lambda$ are indistinguishable, as both the theory and
Table~\ref{tab:tab1} predict.

\begin{figure}[h]
\begin{center}
	\subfigure[Difference in averages with small $\delta$]{
			\includegraphics[width=0.5\textwidth]{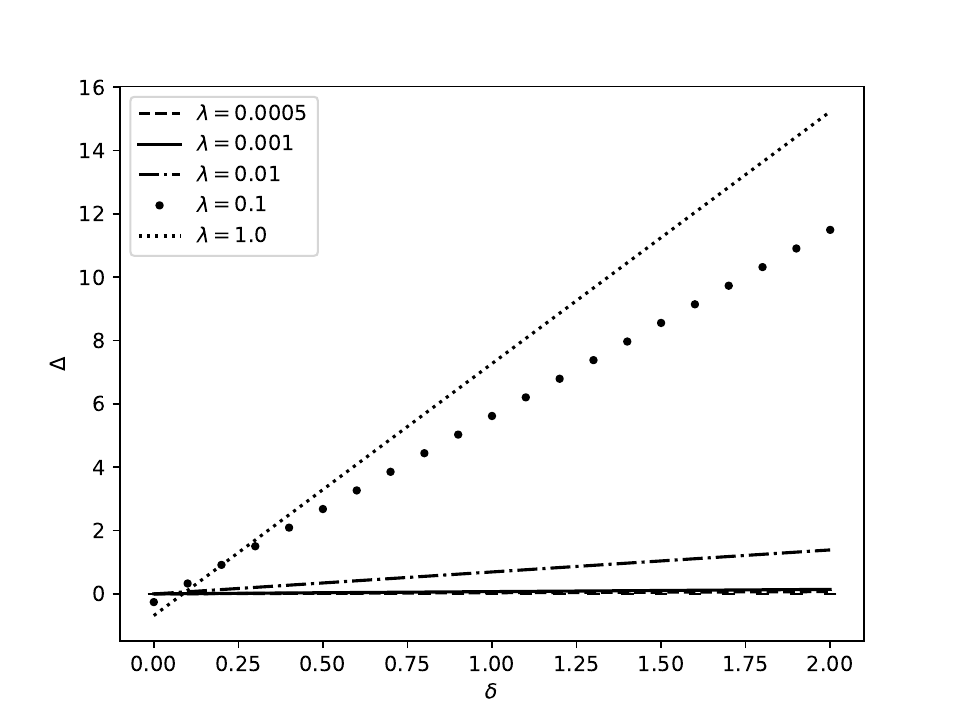}\hspace{-5mm}
	}\label{fig:robust}
\hspace{0.05in}
	\subfigure[Difference in averages with large $\delta$]{
			\includegraphics[width=0.5\textwidth]{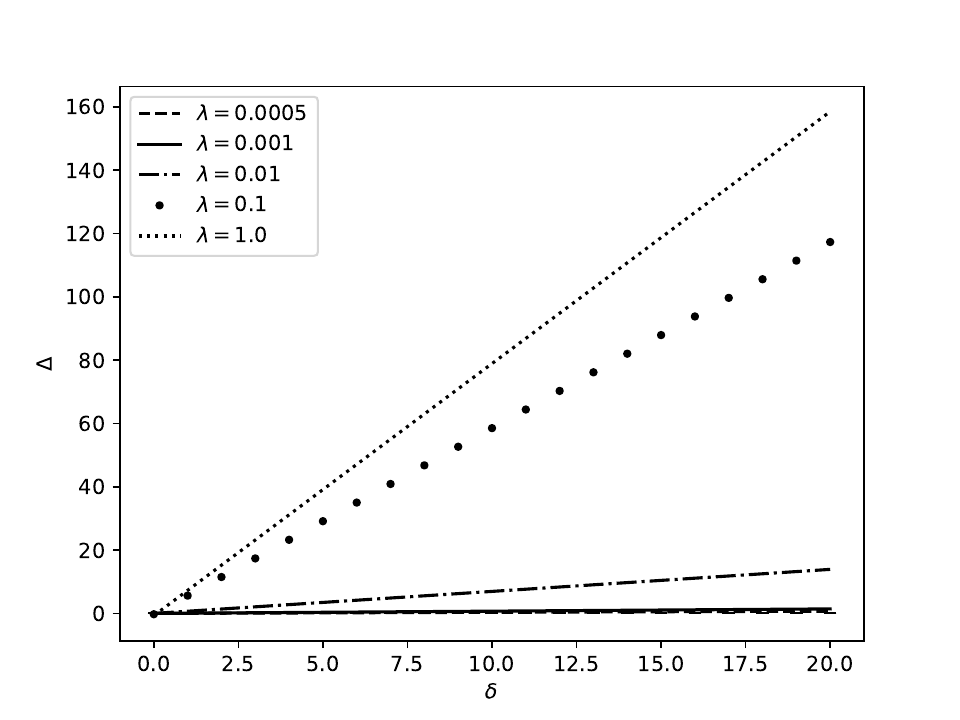}\hspace{-5mm}}\label{fig:robustl}
\end{center}
\caption{\label{fig:robust01} Nonlinear linear least squares example, $\Delta_\lambda(\delta)=Er_\delta(\vp_{ls})  - Er_\delta(\vp_{rls}^\lambda)$ }
\end{figure}

\vspace{-0.15in}

\subsection{Linear least squares example}
\label{subsec:lsqeg}
In this section we consider a linear form of the example in
\S~\ref{subsec:nlinteg}. The only difference is that we use
\begeq
\label{eq:linphi}
\phi(u) =\pi u
\endeq
in place of \eqnok{nlphi}. Its discrete problem becomes linear least squares problem
with
\[
\mf(\vp) = \left( \begin{array}{c}
\pi\mg_h \mt \vp   - \vf_h \\
\pi\sqrt{\alpha} D_x \mt \vp
\end{array}
\right).
\] The least square solution is
$$\vp_{ls}=(\ma^\top \ma)^{-1}\ma^\top \vb,  \quad   {\rm where} \quad
\ma=\left( \begin{array}{c}
\pi\mg_h \mt \\
\pi\sqrt{\alpha} D_x \mt
\end{array}
\right),     \,\, \vb=\left( \begin{array}{c}
\vf_h \\
0
\end{array}
\right).
$$
The robust squares solution is
\begin{equation}\label{RLLS}
\vp_{rls}^\lambda=\mathrm{argmin} \Psi_\lambda(\vp,\mu):= \|\ma \vp-\vb\|^2+2\lambda
\Theta(\mc^\top (\ma \vp-\vb), \mu).
\end{equation}
We present numerical results in the first four columns of Table 2 and plot
$\Delta_\lambda(\delta)=Er_\delta(\vp_{ls})  - Er_\delta(\vp_{rls}^\lambda)$ as a function
of $\delta$ and five values of $\lambda$
in Figure 2 as we
did in \S~\ref{subsec:nlinteg} and the conclusions are the same.

\begin{table}[h!]
\caption{\label{tab:tab2} Linear Case: Values of $\Psi_\lambda$ and
$\| \nabla \Psi_\lambda \|$ at computed minimizers}
\centerline{
\begin{tabular}{ccccc}
$\lambda$ &$\Psi_\lambda$ &$\| \nabla \Psi_\lambda \|$ &    slope$(\vp_{ls},\vp^\lambda_{rls})$ & slope$(\vp^\lambda_{las},\vp^\lambda_{rls})$ \\
\hline
0.00e+00 & 4.51e-02 & 9.76e-16 & 0.00e+00 & 0.00e+00\\
1.00e+00 & 8.08e+00 & 2.91e-08 & 7.95e+00 & 9.01e+00 \\
5.00e-01 & 5.05e+00 & 1.45e-10 & 3.98e+00 & 4.36e+00   \\
1.00e+01 & 1.80e+01 & 2.60e-08 & 1.15e+01 &6.30e+01 \\
1.00e+02 & 3.62e+01 & 5.54e-08 & 1.19e+01 &9.86e+01 \\
2.00e+02 & 4.06e+01 & 5.23e-08 & 1.20e+01 &9.86e+01 \\
\hline
\end{tabular}
}
\end{table}

\begin{figure}[h]
\begin{center}
        \subfigure[Difference in averages with small $\delta$]{
                        \includegraphics[width=0.5\textwidth]{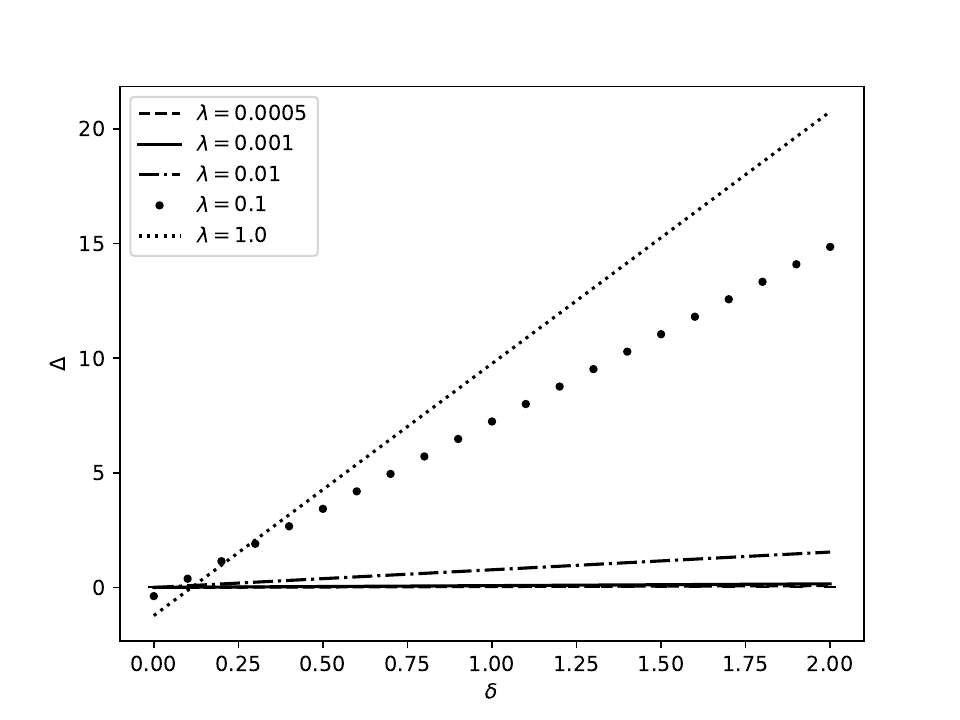}\hspace{-5mm}  }
        \hspace{0.05in}
        \subfigure[Difference in averages with large $\delta$]{
                       \includegraphics[width=0.5\textwidth]{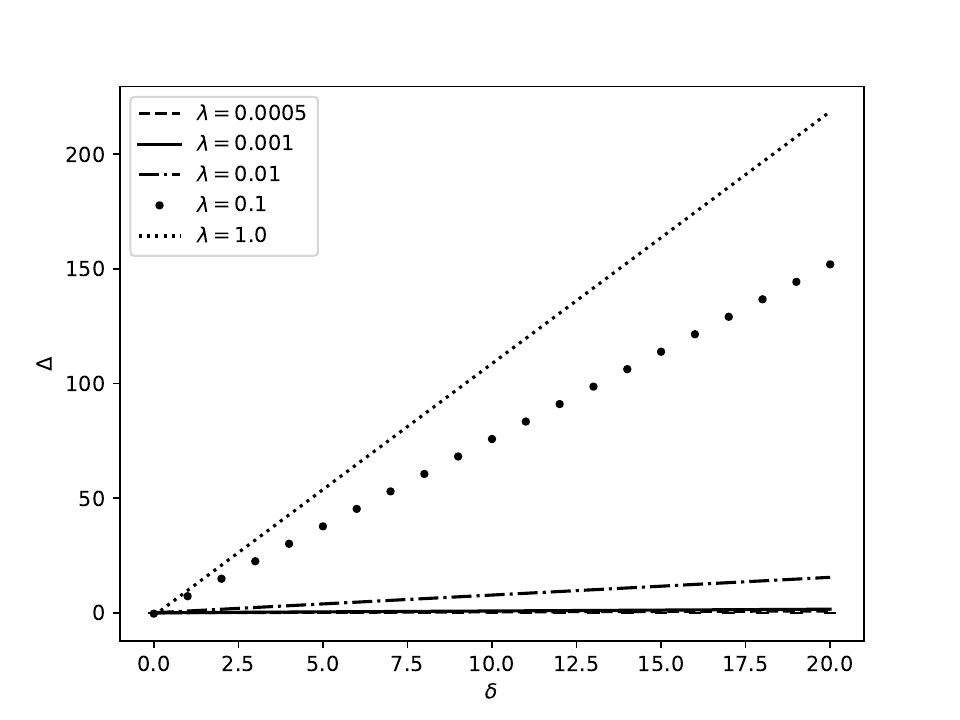}\hspace{-5mm}  }
\end{center}
\caption{Linear least squares example, $\Delta_\lambda(\delta)=Er_\delta(\vp_{ls})  - Er_\delta(\vp_{rls}^\lambda)$} \label{Figure2}
\end{figure}


\subsection{Robust linear least squares solutions vs Lasso solutions in the worst-case residual errors}

We continue to consider the linear least squares problem from \S~\ref{subsec:lsqeg} and
compare the robust solution with the Lasso solution.
Lasso  (least absolute shrinkage and selection operator)
is a popular regression analysis model introduced by  \citet{Lasso}, which
is a $\ell_1$   regularized  linear least square problem
\begin{equation}\label{Lasso}
\min \|\ma \vp-\vb\|^2 +  \tau \|\vp\|_1.
\end{equation}
Adding the $\ell_1$ regularization term is to improve the prediction accuracy and robustness of the solution.

In the context of this paper, we use $\tau = 2 \lambda$ to compare
the solutions $\vp^\lambda_{las}$ of (\ref{Lasso})
to the robust least squares solution $\vp_{rls}^\lambda$ of
(\ref{RLLS})
for several values of $\lambda$. We follow the methods of the previous sections
and look at the worst-case residual errors $Er_\delta(\vp)$ for the robust least squares solution
$\vp_{rls}^\lambda$ and the Lasso solution $\vp_{las}^\lambda$.
We present the slope in the last column of Table~\ref{tab:tab2}.
In Figure 3, we
plot $\Delta_\lambda(\delta)=Er_\delta(\vp_{las}^\lambda)  - Er_\delta(\vp_{rls}^\lambda)$ as a function
of $\delta$ and five values of $\lambda$.

\begin{figure}[h]
\begin{center}
        \subfigure[Difference in averages with small $\delta$]{
                        \includegraphics[width=0.45\textwidth]{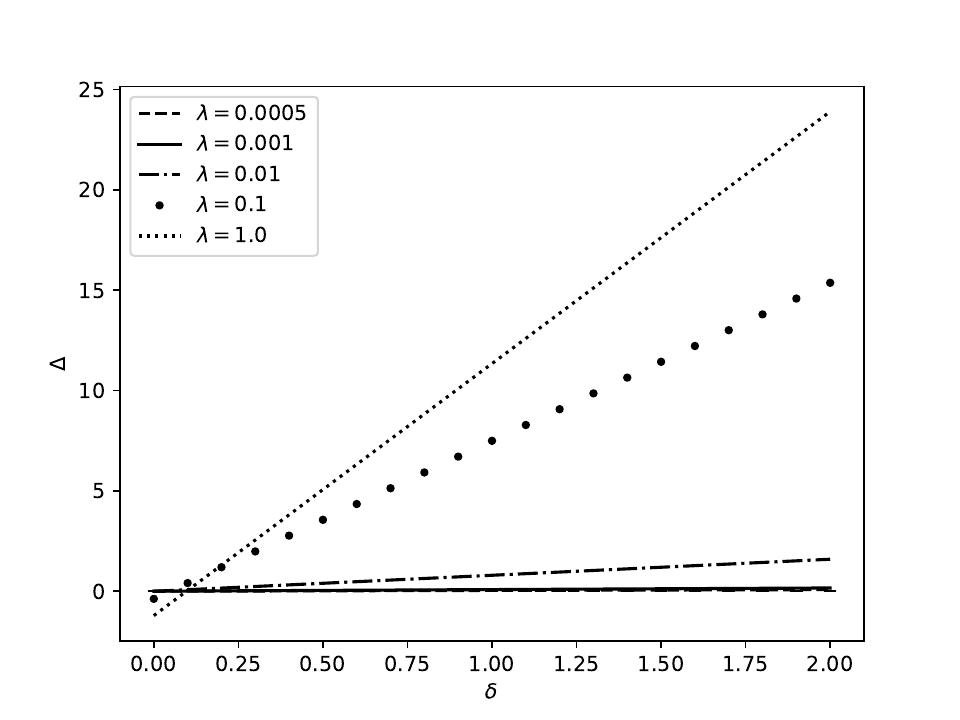}
        }
\hspace{0.05in}
        \subfigure[Difference in averages with lager $\delta$]{
                        \includegraphics[width=0.45\textwidth]{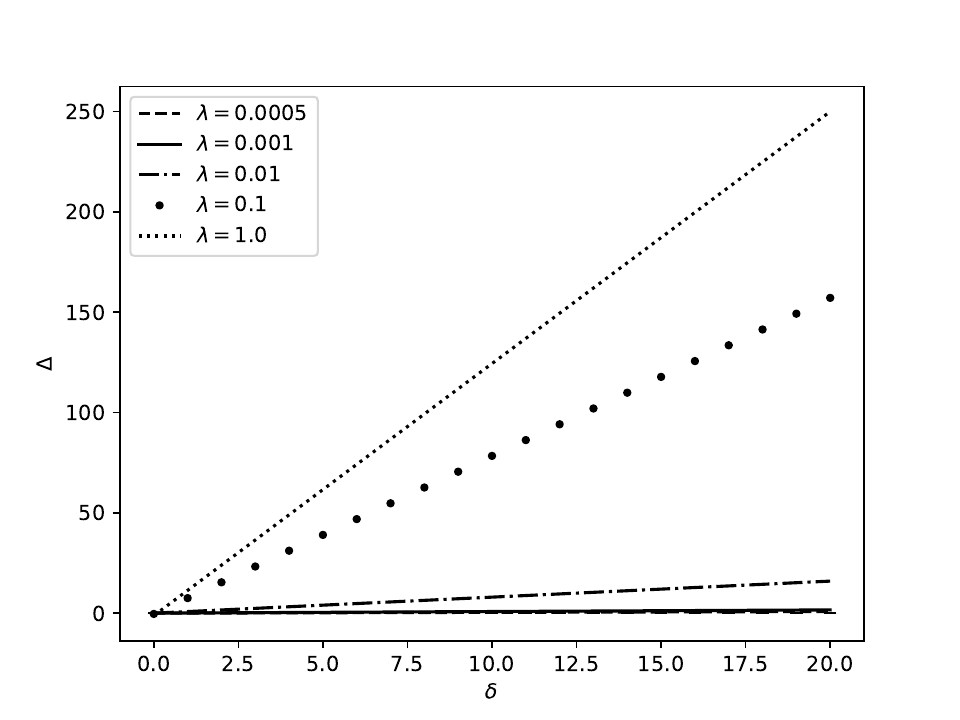} }
\end{center}
\caption{Robust linear least squares vs Lasso,  $\Delta_\lambda(\delta)=Er_\delta(\vp^\lambda_{las})  - Er_\delta(\vp_{rls}^\lambda)$}
\end{figure}

\subsubsection{Reproducibility}
\label{subsec:codes}

We store the Julia codes that
generated the table and figures in the GitHub repository:
\url{https://github.com/ctkelley/RLS_Example.jl}

{\color{black}
\section{Conclusion}
We consider the min-max problem (\ref{eq2}) for robust nonlinear
least squares problems. We use the example in subsection 1.2 to show that  the min-max problem (\ref{eq2}) does not have a local minimax point and the first order optimality
conditions defined by gradients of the objective function  can lead to incorrect solutions.  We give an explicit formula (\ref{inner})
for the value function of the inner maximization problem in (\ref{eq2}). We provide error bounds from  a solution of
the least squares problem (\ref{nls}) to the solution set of the
robust least squares problem.  Moreover, we propose a smoothing method for finding a
minimax point of the min-max problem (\ref{eq2}) by using the formula (\ref{inner}).
We show that finding an $\epsilon$ minimax critical point of the min-max problem (\ref{eq2})
needs at most $O(\epsilon^{-2} +\delta^2\epsilon^{-3}) $
evaluations of the function value and gradient of the
objective function by the smoothing method. Moreover, we give an explicit form
    for the difference $\Delta(\delta)$ of
    worst-case residual errors of a least squares solution and a robust least squares solution, and show  $\Delta(\delta)$ increases linearly as $\delta$ increases. Numerical results of  integral equations with uncertain data demonstrate the robustness of solutions of our approach and unstable behaviour of
least squares solutions disregarding uncertainties in the data.

\section{Funding}

We would like to acknowledge support for this project from
National Science Foundation Grant
DMS-1906446 and
Hong Kong Research Grant Council PolyU15300123.

\bibliographystyle{IMANUM-BIB}
\bibliography{MaxProb}

}

\end{document}